\titleformat{\section}{\large\bf\boldmath}{\arabic{section}.}{2ex}{}
\titlespacing*{\section}{0ex}{2ex}{1ex}
\titleformat{\subsection}{\bf\boldmath}{\arabic{section}.\arabic{subsection}.}{2ex}{}
\titlespacing*{\subsection}{0ex}{0.8ex}{0ex}
\def\dblone{\hbox{$1\hskip -1.2pt\vrule depth 0pt height 1.6ex width 0.7pt\vrule depth -0.1pt height 0.3pt width 0.12em$}}
\newcommand{\Rmnum}[1]{\expandafter\@slowromancap\romannumeral #1@}
\newtheorem{thmstar}{Theorem}
\newtheorem{theorem}{Theorem}[section]
\newtheorem{lemma}[theorem]{Lemma}
\newtheorem{proposition}[theorem]{Proposition}
\theoremstyle{definition}
\newtheorem{remark}[theorem]{Remark}
\newcommand{\B}{\operatorname{B}}
\newcommand{\recht}{\rightarrow}
\newcommand{\cZ}{\mathcal{Z}}
\newcommand{\om}{\omega}
\newcommand{\cR}{\mathcal{R}}
\newcommand{\vphi}{\varphi}
\newcommand{\cF}{\mathcal{F}}
\newcommand{\cH}{\mathcal{H}}
\newcommand{\cG}{\mathcal{G}}
\newcommand{\N}{\mathbb{N}}
\newcommand{\cU}{\mathcal{U}}
\newcommand{\Ker}{\operatorname{Ker}}
\newcommand{\Ima}{\operatorname{Im}}
\newcommand{\cJ}{\mathcal{J}}
\newcommand{\Z}{\mathbb{Z}}
\newcommand{\actson}{\curvearrowright}
\newcommand{\Aut}{\operatorname{Aut}}
\newcommand{\C}{\mathbb{C}}
\newcommand{\supp}{\operatorname{supp}}
\newcommand{\graph}{\operatorname{graph}}
\newcommand{\dom}{\operatorname{dom}}
\newcommand{\ran}{\operatorname{ran}}
\newcommand{\al}{\alpha}
\newcommand{\R}{\mathbb{R}}
\newcommand{\Tr}{\operatorname{Tr}}
\newcommand{\M}{\operatorname{M}}
\newcommand{\lspan}{\operatorname{span}}
\newcommand{\ot}{\otimes}
\newcommand{\bim}[3]{\mathord{\raisebox{-0.4ex}[0ex][0ex]{\scriptsize $#1$}{#2}\hspace{-0.05ex}\raisebox{-0.4ex}[0ex][0ex]{\scriptsize $#3$}}}
\newcommand{\eps}{\varepsilon}
\newcommand{\HNN}{\operatorname{HNN}}
\newcommand{\T}{\mathbb{T}}
\newcommand{\F}{\mathbb{F}}
\newcommand{\si}{\sigma}
\newcommand{\dpr}{^{\prime\prime}}
\newcommand{\PIso}{\operatorname{PIso}}
\newcommand{\PAut}{\operatorname{PAut}}
\newcommand{\ram}{\mathcal{R}(A\subset M)}
\newcommand{\rnm}{\mathcal{R}_{n,m}}
\newcommand{\qam}{\operatorname{Q}_M(A)}
\newcommand{\qab}{\operatorname{Q}_M(A,B)}
\newcommand{\BS}{\operatorname{BS}}
\newcommand{\cN}{\mathcal{N}}
\newcommand{\op}{^{\text{\rm op}}}
\newcommand{\QN}{\operatorname{QN}}
\newcommand{\Qu}{\operatorname{Q}}
\newcommand{\cRtil}{\widetilde{\mathcal{R}}}
\newcommand{\Sp}{\operatorname{Sp}}
\newcommand{\ovt}{\mathbin{\overline{\otimes}}}
\newcommand{\cM}{\mathcal{M}}
\begin{document}\begin{center}
{\LARGE\bf\boldmath Partial classification of the Baumslag-Solitar\vspace{0.5ex}\\ group von Neumann algebras}

\vspace{1ex}

by Niels Meesschaert\footnote{KU~Leuven, Department of Mathematics, Leuven (Belgium), niels.meesschaert@wis.kuleuven.be \\
    Supported by ERC Starting Grant VNALG-200749 and Research Programme G.0639.11 of the Research Foundation --
    Flanders (FWO).} and Stefaan Vaes\footnote{KU~Leuven, Department of Mathematics, Leuven (Belgium), stefaan.vaes@wis.kuleuven.be \\
    Supported by ERC Starting Grant VNALG-200749, Research
    Programme G.0639.11 of the Research Foundation --
    Flanders (FWO) and KU Leuven BOF research grant OT/13/079.}
\end{center}

\begin{abstract}\noindent
We prove that the rational number $|n/m|$ is an invariant of the group von Neumann algebra of the Baumslag-Solitar group $\BS(n,m)$. More precisely, if $L(\BS(n,m))$ is isomorphic with $L(\BS(n',m'))$, then $|n'/m'| = |n/m|^{\pm 1}$. We obtain this result by associating to abelian, but not maximal abelian, subalgebras of a II$_1$ factor, an equivalence relation that can be of type III. In particular, we associate to $L(\BS(n,m))$ a canonical equivalence relation of type III$_{|n/m|}$.
\end{abstract}

\section{Introduction and statement of the main result}

Some of the deepest open problems in functional analysis center around the classification of group von Neumann algebras $L(G)$ associated with certain natural families of countable groups $G$. In the case of the free groups, this becomes the famous free group factor problem asking whether $L(\F_n) \cong L(\F_m)$ when $n,m \geq 2$ and $n \neq m$. For property (T) groups with infinite conjugacy classes (icc), this leads to Connes's rigidity conjecture (\cite{Co80}) asserting that an isomorphism $L(G) \cong L(\Lambda)$ between the property (T) factors entails an isomorphism $G \cong \Lambda$ of the groups.

As a consequence of Connes's uniqueness theorem of injective II$_1$ factors (\cite{Co75}), the group von Neumann algebra $L(G)$ of an amenable icc group $G$ is isomorphic with the unique hyperfinite II$_1$ factor $R$. In the nonamenable case, many nonisomorphic groups $G$ are known to have nonisomorphic group von Neumann algebras $L(G)$. Nevertheless, concerning the classification of group von Neumann algebras of natural families of groups, e.g.\ lattices in simple Lie groups, little is known. A notable exception however is \cite{CH88} where it is shown that for $n \neq m$, lattices in $\Sp(n,1)$, respectively $\Sp(m,1)$, have nonisomorphic group von Neumann algebras.

Since 2001, Popa has been developing a new arsenal of techniques to study II$_1$ factors, called deformation/rigidity theory. This theory has provided several classes $\cG$ of groups such that an isomorphism $L(G) \cong L(\Lambda)$ with both $G,\Lambda \in \cG$ entails the isomorphism $G \cong \Lambda$. By \cite{Po04}, this holds in particular when $\cG$ is the class of wreath product groups of the form $(\Z / 2\Z) \wr \Gamma$ with $\Gamma$ an icc property (T) group.

In \cite{IPV10}, the first W$^*$-superrigidity theorems for group von Neumann algebras were discovered, yielding icc groups $G$ such that an isomorphism $L(G) \cong L(\Lambda)$ with $\Lambda$ an \emph{arbitrary} countable group, implies that $G \cong \Lambda$. The groups $G$ discovered in \cite{IPV10} are generalized wreath products of a special form. In \cite{BV12}, it was then shown that one can actually take $G = (\Z/2\Z)^{(\Gamma)} \rtimes (\Gamma \times \Gamma)$ with $\Gamma$ ranging over a large family of nonamenable groups including the free groups $\F_n$, $n \geq 2$.

In this article, we apply Popa's deformation/rigidity theory to partially classify the group von Neumann algebras of the Baumslag-Solitar groups $\BS(n,m)$. Recall that for all $n,m \in \Z - \{0\}$, this group is defined as the group generated by $a$ and $b$ subject to the relation $b a^n b^{-1} = a^m$. So,
$$\BS(n,m) := \langle a,b \mid b a^n b^{-1} = a^m \rangle \; .$$
The Baumslag-Solitar groups were introduced in \cite{BS62} as the first examples of finitely presented non-Hopfian groups. Ever since, they have been used as examples and counterexamples for numerous group theoretic phenomena. Therefore, it is a natural problem to classify the group von Neumann algebras $L(\BS(n,m))$.

Whenever $|n| = 1$ or $|m|=1$, the group $\BS(n,m)$ is solvable, hence amenable. So we always assume that $|n| \geq 2$ and $|m| \geq 2$. In that case, $\BS(n,m)$ contains a copy of the free group $\F_2$ and hence, is nonamenable. In \cite{Mo91}, the Baumslag-Solitar groups were classified up to isomorphism: $\BS(n,m)\cong \BS(n',m')$ if and only if $\{n,m\}=\{\eps n',\eps m'\}$ for some $\eps\in\{-1,1\}$. So, up to isomorphism, we only consider $2 \leq n \leq |m|$. Finally by \cite[Exemple 2.4]{St05}, the group $\BS(n,m)$ is icc if and only if $|n|\neq |m|$. Therefore, we always assume that $2 \leq n < |m|$.

Using Popa's deformation/rigidity theory and in particular his spectral gap rigidity (\cite{Po06}) and the work on amalgamated free products (\cite{IPP05}), several structural properties of the II$_1$ factors $M=L(\BS(n,m))$ were proven. In particular, it was shown in \cite{Fi10} that $M$ is not solid, that $M$ is prime and that $M$ has no Cartan subalgebra. More generally, it is proven in \cite{Fi10} that any amenable regular von Neumann subalgebra of $M$ must have a nonamenable relative commutant.

Our main result is the following partial classification theorem for the Baumslag-Solitar group von Neumann algebras $L(\BS(n,m))$. Whenever $M$ is a II$_1$ factor and $t > 0$, we denote by $M^t$ the \emph{amplification} of $M$. Up to unitary conjugacy, $M^t$ is defined as $p(\M_n(\C) \ot M)p$ where $p$ is a projection satisfying $(\Tr \ot \tau)(p) = t$. The II$_1$ factors $M$ and $N$ are called \emph{stably isomorphic} if there exists a $t > 0$ such that $M \cong N^t$.

\begin{thmstar}\label{thm.A}
Let $n,m,n',m'\in \Z$ such that $2\leq n < |m|$ and $2\leq n' < |m'|$. If $L(\BS(n,m))$ is stably isomorphic with $L(\BS(n',m'))$, then $\frac{n}{|m|}=\frac{n'}{|m'|}$.
\end{thmstar}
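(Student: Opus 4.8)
The plan is to attach to $M=L(\BS(n,m))$ a canonical equivalence relation whose Krieger type encodes the number $n/|m|=|n/m|$, and then to argue that this relation is a stable-isomorphism invariant of $M$.

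\emph{The abelian subalgebra and the scaling.} I would start from the abelian subalgebra $A:=L(\langle a\rangle)\cong L^\infty(\T)$, under which $u_a$ becomes the coordinate function $z$, together with the two subalgebras $A_n:=L(\langle a^n\rangle)$ and $A_m:=L(\langle a^m\rangle)$ of $A$. The defining relation $u_bu_a^nu_b^{*}=u_a^m$ says precisely that $\operatorname{Ad}(u_b)$ restricts to a $*$-isomorphism $\theta\colon A_n\to A_m$ sending $u_a^n$ to $u_a^m$. Passing to spectra, $\widehat A=\T$ is an $n$-fold cover of $\widehat{A_n}=\T$ via $z\mapsto z^n$ and an $m$-fold cover of $\widehat{A_m}=\T$ via $z\mapsto z^m$, while $\theta$ induces the identity $\widehat{A_m}\to\widehat{A_n}$. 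Hence $u_b$ relates $z_1,z_2\in\T$ exactly when $z_2^{m}=z_1^{n}$, and this partial transformation has constant modulus $|dz_2/dz_1|=n/|m|$: it scales normalized Haar measure by the factor $n/|m|$.

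\emph{The equivalence relation and its type.} Next I would define $\cR=\cR(A\subset M)$ on $(\T,\text{Haar})$ as the relation generated by the inverse semigroup of partial isometries of $M$ that carry spectral projections of $A$ to spectral projections of $A$, the decisive generators being those coming from $u_a$ (acting trivially on $\T$) and from $u_b$ (as above). Reading off the Radon--Nikodym cocycle, every such generator scales Haar by an integer power of $n/|m|$; more conceptually the cocycle should be cohomologous to $g\mapsto (n/|m|)^{\pi(g)}$, where $\pi\colon\BS(n,m)\to\Z$ is the height homomorphism $a\mapsto 0$, $b\mapsto 1$. Consequently the ratio set of $\cR$ equals $\{(n/|m|)^{k}:k\in\Z\}\cup\{0\}$, so $\cR$ is of type $\mathrm{III}_{n/|m|}$ (recall $2\le n<|m|$, so $n/|m|\in(0,1)$).

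\emph{Invariance, and the main obstacle.} The crux, and the step I expect to be hardest, is to show that the pair $(A\subset M)$, and therefore $\cR$, is canonical: any stable isomorphism $M\cong (M')^{t}$ with $M'=L(\BS(n',m'))$ should carry $A$ onto a corner of $A'=L(\langle a'\rangle)$ up to unitary conjugacy and amplification, so that $\cR(A\subset M)$ and $\cR(A'\subset M')$ agree up to amplification. To prove this I would characterize $A$ intrinsically as, up to the relevant equivalence, the distinguished abelian subalgebra realizing the amenable (elliptic) direction of the underlying HNN/amalgamated free product structure, combining Popa's intertwining-by-bimodules with the malleable deformation coming from that decomposition (\cite{IPP05}) and spectral gap rigidity (\cite{Po06}), and using Fima's structural constraints on $M$ from \cite{Fi10} (primeness, absence of a Cartan subalgebra, and nonamenability of relative commutants of amenable regular subalgebras) to rule out competing abelian subalgebras. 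Granting this canonicity, the associated equivalence relations are isomorphic up to amplification; since the number $\lambda\in(0,1)$ recorded by Krieger's ratio set is an isomorphism invariant preserved under amplification, we conclude $n/|m|=n'/|m'|$.
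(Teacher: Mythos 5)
Your outline follows the same route as the paper: the same subalgebra $A=\{u_a,u_a^*\}\dpr\cong L^\infty(\T)$, the same generating relation $z_2^m=z_1^n$ with Radon--Nikodym derivative $n/|m|$, a type III$_{n/|m|}$ computation, and deformation/rigidity (\cite{IPP05}, \cite{Po06}, \cite{CH08}, \cite{Fi10}) to make the position of $A$ canonical. But there is a genuine gap exactly at the step you yourself flag as the crux. You ask for a stable isomorphism to ``carry $A$ onto a corner of $A'$ up to \emph{unitary conjugacy} and amplification'', and only then do you compare the equivalence relations. The techniques you invoke can only ever produce intertwining-by-bimodules: they give $\al(A_1p_1)\prec_{M_2} A_2$ and $A_2\prec_{M_2}\al(A_1p_1)$, nothing more. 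Since $A$ is very far from maximal abelian ($A'\cap M$ is nonamenable, Proposition \ref{prop.voorw}), there is no mechanism for upgrading $\prec$ to unitary conjugacy here, and none is needed in the paper. What your argument is missing is precisely the paper's main technical result, Theorem \ref{thm.stable}: for quasi-regular abelian subalgebras with $\cZ(A'\cap M)=A$ and $\cZ(B'\cap M)=B$, the mutual intertwining $A\prec_M B$ and $B\prec_M A$ \emph{already} forces $\cR(A\subset M)$ and $\cR(B\subset M)$ to be stably isomorphic. Proving this is the content of Section \ref{sec.rel}: one extracts from the intertwining a single element $v\in \Qu_M(A,B)$ (Lemma \ref{lem.decomp}) and checks that conjugation by $\al_v$ maps the whole pseudogroup $\cG(A\subset M)$ into $\cG(B\subset M)$ and back (Lemma \ref{lem.product}, which is exactly where the bicommutant hypotheses $\cZ(A'\cap M)=A$, $\cZ(B'\cap M)=B$ are used). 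Your phrase ``Granting this canonicity'' conceals this entire step, and without it the outline does not yield Theorem \ref{thm.A}.

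Two smaller gaps in the type computation. First, from ``every generator scales Haar measure by an integer power of $n/|m|$'' you conclude that the ratio set equals $(n/|m|)^\Z\cup\{0\}$; this does not follow. The essential range of the Radon--Nikodym cocycle lying in $\log(n/|m|)\Z$ is compatible with the relation being of type III$_0$, or even of type II (the cocycle could be a coboundary with values in that group). To pin down type III$_\lambda$ you need, as in Lemma \ref{lem.type}, ergodicity of $\Ker(\om)$; the paper gets this from the measure-preserving rotations by the dense subgroup $\bigl\{\exp\bigl(2\pi i s/(nm)^k\bigr) \mid s,k\in\N\bigr\}\subset\T$, whose graphs lie in the relation. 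This also gives ergodicity of $\cR(A\subset M)$ itself, without which its ``type'' is not even defined. Second, you assert that $u_a$, $u_b$ are the decisive generators of $\cR(A\subset M)$; that the partial isometries in $\Qu_M(A)$ generate nothing beyond the relation spanned by $u_b$ and $A'\cap M$ requires an argument (the gluing Lemma \ref{lem.glue} and Lemma \ref{lem.voortbr}, together with the well-definedness statement Proposition \ref{prop.our-equiv}). Both of these points are fixable along the lines of the paper; the gap in the first paragraph is the essential one.
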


Note that Theorem \ref{thm.A} formally resembles, but is independent of, the results in \cite{Ki11} on orbit equivalence relations of essentially free ergodic probability measure preserving actions of Baumslag-Solitar groups, especially \cite[Proposition B.2 and Theorem 1.2]{Ki11}. It would be very interesting to find a framework that unifies both types of results.

We prove our Theorem \ref{thm.A} by associating a canonical equivalence relation to $L(\BS(n,m))$ and proving that it is of type III$_{n/|m|}$. More precisely,
assume that $(M,\tau)$ is a von Neumann algebra with separable predual, equipped with a faithful normal tracial state. Whenever $A \subset M$ is an abelian von Neumann subalgebra, the normalizer
$$\cN_M(A) := \{u \in \cU(M) \mid u A u^* = A \}$$
induces a group of trace preserving automorphisms of $A$. Writing $A = L^\infty(X,\mu)$ with $\mu$ being induced by $\tau_{|A}$, the corresponding orbit equivalence relation is a countable probability measure preserving (pmp) equivalence relation on $(X,\mu)$.

More generally, we can consider the set of partial isometries
\begin{equation}\label{myset}
\{u \in M \mid u^* u \;\;\text{and}\;\; uu^* \;\;\text{are projections in $A' \cap M$ and}\;\; u A u^* = A uu^* \, \} \; .
\end{equation}
Every such partial isometry induces a partial automorphism of $A$ and hence a partial automorphism of $(X,\mu)$. We denote by $\cR(A \subset M)$ the equivalence relation generated by all these partial automorphisms. When $A \subset M$ is maximal abelian, i.e.\ $A' \cap M = A$, then $\cR(A \subset M)$ coincides with the orbit equivalence relation induced by the normalizer $\cN_M(A)$. In particular, in that case the equivalence relation $\cR(A \subset M)$ preserves the probability measure $\mu$.

If however $A \subset M$ is not maximal abelian, the partial automorphisms of $A$ induced by the partial isometries in the set \eqref{myset} need not be trace preserving. So in general, $\cR(A \subset M)$ can be an equivalence relation of type III.

Our main technical result is Theorem \ref{thm.stable} below, roughly saying the following. If $A, B \subset M$ are abelian subalgebras such that $\cZ(A' \cap M) = A$ and $\cZ(B' \cap M) = B$, and if there exist intertwining bimodules $A \prec B$ and $B \prec A$ (in the sense of Popa, see \cite{Po03} and Theorem \ref{thm.intertwine} below), then the equivalence relations $\cR(A \subset M)$ and $\cR(B \subset M)$ must be stably isomorphic. In particular, their types must be the same.

In Section \ref{sec.class}, we apply this to $M = L(\BS(n,m))$ and $A$ equal to the abelian von Neumann subalgebra generated by the unitary $u_a$. We prove that $\cR(A \subset M)$ is the unique hyperfinite ergodic equivalence relation of type III$_{n / |m|}$.

The proof of Theorem \ref{thm.A} can then be outlined as follows. First we note that the von Neumann algebra $A' \cap M$ is nonamenable. Conversely if $Q \subset M$ is a nonamenable subalgebra, it was proven in \cite{CH08}, using spectral gap rigidity (\cite{Po06}) and the structure theory of amalgamated free product factors (\cite{IPP05}), that $Q' \cap M \prec A$. So, up to intertwining-by-bimodules, the position of $A$ inside $M$ is ``canonical''. Therefore a stable isomorphism between $L(\BS(n,m))$ and $L(\BS(n',m'))$ will preserve, up to intertwining-by-bimodules, these canonical abelian subalgebras. Hence their associated equivalence relations are stably isomorphic and, in particular, have the same type. This gives us the equality $n/|m| = n'/|m'|$.

\section{Preliminaries}\label{sec.pre}

We denote by $(M,\tau)$ a von Neumann algebra equipped with a faithful normal tracial state $\tau$. We always assume that $M$ has a \emph{separable predual}.
If $B$ is a von Neumann subalgebra of $(M,\tau)$, we denote by $E_B$ the unique trace preserving \emph{conditional expectation} of $M$ onto $B$.

Whenever $x \in M$ is a normal element, we denote by $\supp(x)$ its support, i.e.\ the smallest projection $p \in M$ that satisfies $xp = x$ (or equivalently, $px = x$).

Let $\cR$ be a countable nonsingular (i.e.\ measure class preserving) equivalence relation on a standard probability space $(X,\mu)$. We denote by $[[\cR]]$ the \emph{full pseudogroup} of $\cR$, i.e.\ the pseudogroup of all partial nonsingular automorphisms $\varphi$ of $X$ such that the graph of $\varphi$ is contained in $\cR$. We denote the domain of $\varphi$ by $\dom(\varphi)$ and its range by $\ran(\varphi)$. We denote by $[x]$ the equivalence class of $x\in X$.

Assume that also $\cR'$ is a countable nonsingular equivalence relation on the standard probability space $(X',\mu')$. The equivalence relations $\cR$ and $\cR'$ are called
\begin{itemize}
\item \emph{isomorphic}, if there exists a nonsingular isomorphism $\Delta:X\rightarrow X'$ such that $\Delta([x])=[\Delta(x)]$ for almost every $x \in X$~;
\item \emph{stably isomorphic}, if there exist Borel subsets $Z \subset X$ and $Z'\subset X'$ that meet almost every orbit and a nonsingular isomorphism $\Delta:Z \rightarrow Z'$ such that $\Delta([x]\cap Z)=[\Delta(x)]\cap Z'$ for almost every $x \in Z$.
\end{itemize}

\subsection{HNN extensions and Baumslag-Solitar groups}\label{subsec.HNN}

Let $G$ be a group, $H < G$ a subgroup and $\theta:H\rightarrow G$ an injective group homomorphism. The \emph{HNN extension} $\HNN(G,H,\theta)$ is defined as the group generated by $G$ and an additional element $t$ subject to the relation $\theta(h) = tht^{-1}$ for all $h \in H$. So,
$$\HNN(G,H,\theta) = \langle G,t \mid \theta(h)=tht^{-1} \;\;\text{for all} \;\; h\in H\rangle \; .$$

Elements of $\HNN(G,H,\theta)$ can be canonically written as ``reduced words'' using as letters the elements of $G$ and the letters $t^{\pm 1}$. More precisely, we have the following lemma.

\begin{lemma}[Britton's lemma, \cite{Br63}]\label{lem.brit}
Consider the expression $g = g_0 t^{n_1} g_1 t^{n_2} \cdots t^{n_k} g_k$ with $k \geq 0$, $g_0,g_k \in G$, $g_1,\ldots,g_{k-1} \in G-\{e\}$ and $n_1,\ldots,n_k \in \Z - \{0\}$. We call this expression reduced if the following two conditions hold:
\begin{itemize}
\item for every $i \in \{1,\ldots,k-1\}$ with $n_i > 0$ and $n_{i+1} < 0$, we have $g_i \not\in H$,
\item for every $i \in \{1,\ldots,k-1\}$ with $n_i < 0$ and $n_{i+1} > 0$, we have $g_i \not\in \theta(H)$.
\end{itemize}
If the above expression for $g$ is reduced, then $g \neq e$ in the group $\HNN(G,H,\theta)$, unless $k=0$ and $g_0 = e$. In particular, the natural homomorphism of $G$ to $\HNN(G,H,\theta)$ is injective.
\end{lemma}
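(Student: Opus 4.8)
This is the classical Britton lemma, so I will sketch a self-contained proof by the permutation-representation (van der Waerden) method: I build an explicit action of $\HNN(G,H,\theta)$ on a set $\Omega$ of normal forms and read off non-triviality of a reduced word from the way it moves a base point. First I fix a right transversal $S_H \subset G$ for $H$ and a right transversal $S_{\theta(H)} \subset G$ for $\theta(H)$, each containing $e$ as the representative of the subgroup itself, and I let $\Omega$ be the set of sequences $g_0 t^{\eps_1} g_1 \cdots t^{\eps_k} g_k$ with $k \geq 0$, $g_0 \in G$, $\eps_i \in \{+1,-1\}$, subject to the normalization that $g_i \in S_H$ whenever $\eps_i = +1$ and $g_i \in S_{\theta(H)}$ whenever $\eps_i = -1$, and to the reducedness requirement forbidding the patterns $(\eps_i,g_i,\eps_{i+1}) = (+1,e,-1)$ and $(-1,e,+1)$.

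Next I define transformations of $\Omega$ for the generating letters. An element $g \in G$ acts by $g \cdot (g_0 t^{\eps_1}\cdots) = (g g_0) t^{\eps_1}\cdots$, which is again in $\Omega$ because the leading slot is unconstrained; this is manifestly a homomorphism $G \recht \operatorname{Sym}(\Omega)$. For the stable letter, acting on $w = g_0 t^{\eps_1}\cdots t^{\eps_k} g_k$ I factor $g_0 = h s$ with $h \in H$, $s \in S_H$ and set $t \cdot w = \theta(h)\, t\, s\, t^{\eps_1} g_1 \cdots$ in the generic case, while in the pinch case $s = e$ and $\eps_1 = -1$ I cancel and set $t \cdot w = (\theta(h) g_1)\, t^{\eps_2} g_2 \cdots$; the action of $t^{-1}$ is defined symmetrically, factoring $g_0 = g' s'$ with $g' \in \theta(H)$, $s' \in S_{\theta(H)}$ and using $t^{-1} g' = \theta^{-1}(g')\, t^{-1}$.

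I then carry out the two verifications on which everything rests. First, that $t\cdot$ and $t^{-1}\cdot$ are mutually inverse bijections of $\Omega$, the generic and pinch cases of one matching the pinch and generic cases of the other. Second, that the defining relation is respected as transformations of $\Omega$, namely $\rho(\theta(h)) = \rho(t)\,\rho(h)\,\rho(t)^{-1}$ for every $h \in H$. Granting these, the universal property of the HNN extension produces a homomorphism $\rho : \HNN(G,H,\theta) \recht \operatorname{Sym}(\Omega)$.

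Finally I evaluate $\rho$ on the word of the statement, read as a word in the single letters $t^{\pm 1}$ and the elements of $G$, applied letter by letter from the right to the base point $e \in \Omega$. The key observation is that a nonempty same-sign block of stable letters always leaves a leading group element lying in $\theta(H)$ (after a positive block) or in $H$ (after a negative block); combined with the two reducedness conditions of the statement, this shows that the pinch case is never triggered, so the total number of stable letters $\sum_i |n_i|$ is preserved. Hence $\rho(g)\cdot e$ is a normal form with the same sign sequence as $g$; if $k \geq 1$ it has positive length and is therefore $\neq e$, whence $\rho(g) \neq \id$ and $g \neq e$ in $\HNN(G,H,\theta)$, while for $k = 0$ one gets $\rho(g_0)\cdot e = g_0$, yielding the injectivity of $G \recht \HNN(G,H,\theta)$. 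The main obstacle is the middle step: establishing bijectivity of $\rho(t)$ and the relation $\rho(\theta(h)) = \rho(t)\rho(h)\rho(t)^{-1}$ on all of $\Omega$ requires the coset bookkeeping to interlock correctly across the generic and pinch cases, and this is the only genuinely computational point; an alternative conceptual route realizes $\HNN(G,H,\theta)$ as the fundamental group of the graph of groups with vertex group $G$ and edge group $H$ included via $\id$ and $\theta$, deriving the lemma from the action on the Bass--Serre tree, but I would keep the elementary argument as the primary line.
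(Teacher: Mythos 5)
The paper does not prove this lemma at all: it is quoted as a classical result with a citation to Britton's original article, so there is no internal proof to compare yours against. Your argument is the standard normal-form (van der Waerden) proof, and it is set up correctly for the convention $\theta(h)=tht^{-1}$ used in the paper: factoring $g_0=hs$ with $h\in H$, $s\in S_H$ when applying $t$, and $g_0=g's'$ with $g'\in\theta(H)$, $s'\in S_{\theta(H)}$ when applying $t^{-1}$, does land back in $\Omega$, and your key observation is the right one --- a generic application of $t^{+1}$ (resp.\ $t^{-1}$) leaves a leading element in $\theta(H)$ (resp.\ $H$), so at a sign change between blocks a pinch would force $g_i\in H$ (resp.\ $g_i\in\theta(H)$), which is exactly what the two reducedness conditions exclude, while at same-sign junctions no pinch is possible because the topmost exponent sign in the current normal form matches the incoming letter (so the hypothesis $g_i\neq e$ is not even needed in that case). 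The two verifications you defer --- that $\rho(t)$ and $\rho(t^{-1})$ are mutually inverse bijections of $\Omega$ and that $\rho(\theta(h))=\rho(t)\rho(h)\rho(t)^{-1}$ for all $h\in H$ --- are each a finite case check (generic/pinch against generic/pinch) that does go through with your coset bookkeeping, as one sees for instance from $t^{-1}\cdot(\theta(h)\,t\,s\cdots)$ triggering the pinch case and returning $hs\cdots=g_0\cdots$; a complete write-up should include these computations, but flagging them as the only computational content is accurate rather than a gap. In short, your proof is correct and supplies an argument where the paper merely cites one.
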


Recall from the introduction that the Baumslag-Solitar group $\BS(n,m)$ is defined for all $n,m\in \Z - \{0\}$ as
$$\BS(n,m):=\langle a,b \mid ba^nb^{-1}=a^m\rangle \; .$$
It is one of the easiest examples of an HNN extension. We also recall from the introduction that the $\BS(n,m)$ with $2 \leq n < |m|$ form a complete list of all nonamenable icc Baumslag-Solitar groups up to isomorphism. Since we only want to consider the case where $L(\BS(n,m))$ is a nonamenable II$_1$ factor, we always assume that $2\leq n < |m|$.

\subsection{Hilbert bimodules and intertwining-by-bimodules}\label{subsec.inter}

If $M$ and $N$ are tracial von Neumann algebras, then a \emph{left $M$-module} is a Hilbert space $\cH$ endowed with a normal $*$-homomorphism $\pi:M\rightarrow \B(\cH)$. A \emph{right $N$-module} is a left $N\op$-module. An \emph{$M$-$N$-bimodule} is a Hilbert space $\cH$ endowed with commuting normal $*$-homomorphisms $\pi:M\rightarrow \B(\cH)$ and $\varphi:N\op\rightarrow \B(\cH)$. For $x\in M, y\in N$ and $\xi\in \cH$, we write $x\xi y$ instead of $\pi(x)\varphi(y\op)(\xi)$. We denote an $M$-$N$-bimodule $\cH$ by $\bim{M}{\cH}{N}$. We call an $M$-$N$-bimodule \emph{bifinite} if it is finitely generated both as a left Hilbert $M$-module and a right Hilbert $N$-module.

Let $A$ and $B$ be abelian von Neumann algebras. We denote by $\PIso(A,B)$ the set of all partial isomorphisms from $A$ to $B$, i.e.\ isomorphisms $\alpha:Aq\rightarrow Bp$, where $q\in A$ and $p\in B$ are projections. We write $\PAut(A)$ instead of $\PIso(A,A)$. Note that to every $\alpha\in \PIso(A,B)$ we can associate an $A$-$B$-bimodule $\bim{A}{\cH(\alpha)}{B}$ given by $\cH(\alpha)=L^2(Bp)$ and $a\xi b=\alpha(aq) \, \xi \, bp$. The composition of two partial isomorphisms is defined as follows: if $\alpha \in \PIso(B,C)$ and $\beta \in \PIso(A,B)$ are given by $\alpha : Bp \recht Cr$ and $\beta : Aq \recht Bp'$ for projections $q \in A$, $p,p' \in B$ and $r \in C$, then the composition $\alpha \circ \beta \in \PIso(A,C)$ is defined by $x \mapsto \alpha(\beta(x))$ for all $x \in A q \beta^{-1}(pp')$.

The following is a well known result.
\begin{lemma}\label{lem.finite}
Let $A$ and $B$ be abelian von Neumann algebras. Then every bifinite $A$-$B$-bimodule $\bim{A}{\cH}{B}$ is isomorphic to a direct sum of bimodules of the form $\bim{A}{\cH(\alpha)}{B}$ with $\alpha\in \PIso(A,B)$.
\end{lemma}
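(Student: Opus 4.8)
The plan is to translate the statement into the language of representations of abelian von Neumann algebras and then to argue by measure theory. Write $A = L^\infty(X,\nu)$ and $B = L^\infty(Y,\mu)$ with $\nu,\mu$ probability measures inducing the traces. Since $A$ and $B$ are abelian and the left $A$-action commutes with the right $B$-action, an $A$-$B$-bimodule is exactly a normal representation of the abelian von Neumann algebra $A \ovt B = L^\infty(X \times Y)$ on $\cH$. In these terms I would first record what the building blocks look like: if $\alpha \in \PIso(A,B)$ is the isomorphism $Aq \to Bp$ induced by a Borel isomorphism $T : Y_0 \to X_0$ (where $X_0 = \supp q$, $Y_0 = \supp p$), then $\cH(\alpha) = L^2(Y_0,\mu)$ and $f \in L^\infty(X\times Y)$ acts by $(f\cdot\xi)(y) = f(T(y),y)\,\xi(y)$. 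Thus $\cH(\alpha)$ is precisely the multiplicity-one representation whose scalar spectral measure is supported on the graph $\{(T(y),y) : y \in Y_0\}$, a Borel subset of $X\times Y$ on which both coordinate projections are injective. The lemma then amounts to showing that a bifinite bimodule, viewed as a representation of $L^\infty(X\times Y)$, decomposes as a direct sum of such multiplicity-one ``graph'' pieces.

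Next I would invoke the standard multiplicity theory for normal representations of abelian von Neumann algebras with separable predual: up to unitary equivalence, $\cH \cong \int^{\oplus}_{X\times Y} \C^{n(x,y)}\, d\eta(x,y)$ for a finite measure $\eta$ on $X \times Y$ and a measurable multiplicity function $n : X\times Y \to \{1,2,\dots,\infty\}$, where $\eta$ lies in the scalar spectral measure class of the representation. Normality of the $A$- and $B$-actions forces the two marginals of $\eta$ to satisfy $\eta_X \ll \nu$ and $\eta_Y \ll \mu$. The role of bifiniteness is extracted by disintegration. Disintegrating the right $B$-module as $\cH = \int^{\oplus}_Y \cH_y\, d\mu(y)$, finite generation over $B$ is equivalent to $y \mapsto \dim \cH_y$ being essentially bounded; since $\dim \cH_y = \int_X n(x,y)\, d\eta_y(x)$, this is finite only if, for a.e.\ $y$, the fibre measure $\eta_y$ is purely atomic with finitely many atoms and $n$ is finite there. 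Hence the projection $X\times Y \to Y$ is finite-to-one on $\supp \eta$, and symmetrically finite generation as a left $A$-module makes $X\times Y \to X$ finite-to-one on $\supp \eta$; in particular $n < \infty$ a.e.

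With both projections finite-to-one (hence countable-to-one) on the Borel set $S = \supp\eta$, I would apply the Lusin--Novikov theorem to write $S$ as a countable disjoint union of Borel partial graphs, and then, because \emph{both} sections are countable, refine this into a countable family of Borel sets $S_j$ each of which is simultaneously a graph over $X$ and over $Y$, i.e.\ the graph of a partial Borel isomorphism $T_j$ between subsets $Y_j \subset Y$ and $X_j \subset X$ (a measurable bipartite matching argument). Splitting further along the level sets of the finite function $n$ and decomposing each $\C^k$-fibre into $k$ lines, I reduce to countably many multiplicity-one pieces, each supported on a single $\graph(T_j)$ with spectral measure $\lambda_j := (\pi_Y)_*(\eta|_{S_j})$. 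Since $\lambda_j \ll \mu$ with support $Y_j$, we have $\lambda_j \sim \mu|_{Y_j}$, so the piece is $B$-unitarily $L^2(Y_j,\mu) = L^2(Bp_j)$; tracking the $A$-action then identifies it with $\cH(\alpha_j)$ for $\alpha_j \in \PIso(A,B)$ induced by $T_j$. Assembling the pieces yields $\cH \cong \bigoplus_j \cH(\alpha_j)$, as required.

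The routine part is the representation-theoretic dictionary and the multiplicity-theory input. The hard part will be the descriptive set theory of the third paragraph: decomposing $S$ into countably many graphs of partial \emph{isomorphisms} (not merely partial functions) in a Borel way, together with the careful bookkeeping of measure classes and multiplicities needed to make each summand genuinely equal to some $\cH(\alpha)$ rather than merely absolutely continuous with it.
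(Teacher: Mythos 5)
The paper offers no proof of this lemma at all (it is quoted as ``well known''), so there is no internal argument to compare yours against; I can only judge the proposal on its own merits. Your route is the standard one for this fact: realize the bimodule through a spectral measure on $X\times Y$ with multiplicity, use bifiniteness to force the fibre measures to be finitely atomic in both directions, apply Lusin--Novikov (twice: once to write the supporting set as a union of graphs over $X$, once more inside each graph to get injectivity, hence graphs of partial Borel isomorphisms), and match each multiplicity-one piece with some $\cH(\alpha_j)$ after adjusting densities. Those three paragraphs are correct, including the two points you rightly single out as delicate: that a subset of a graph over $X$ which is also a graph over $Y$ is the graph of a partial \emph{isomorphism}, and that $\lambda_j\sim\mu|_{Y_j}$ lets you replace the spectral measure by $\mu$ so the summand is genuinely $L^2(Bp_j)$ with the action twisted by $\alpha_j$. (Your implicit use of standardness and separability is legitimate here, since the paper's standing assumption is that $M$ has separable predual, and a bifinite module over such algebras is a separable Hilbert space.)

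The one genuine flaw is the opening sentence of your dictionary: an $A$-$B$-bimodule is \emph{not} the same thing as a normal representation of $A\ovt B=L^\infty(X\times Y,\nu\times\mu)$, and this misidentification sits exactly at the subtle point of the lemma. If it were true, the spectral measure $\eta$ would have to satisfy $\eta\ll\nu\times\mu$, whereas the whole point is that $\eta$ is typically \emph{singular} with respect to the product measure: already for $\cH(\alpha)$ itself, $\eta$ lives on the graph of $T$, which is $(\nu\times\mu)$-null (take $\alpha=\id$ on $L^\infty[0,1]$: the bimodule is $L^2[0,1]$ with both sides acting by multiplication, and $F\mapsto F|_{\mathrm{diagonal}}$ is not well defined on $L^\infty(\lambda\times\lambda)$, so this bimodule admits no normal action of $A\ovt B$ whatsoever). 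The correct statement, which is also what your second paragraph actually uses, is this: the two commuting normal actions generate an abelian von Neumann algebra $C\subset\B(\cH)$; writing $C=L^\infty(Z,\eta_0)$, the two normal unital embeddings $A\to C$ and $B\to C$ are given by point maps $Z\to X$ and $Z\to Y$, whose combination $Z\to X\times Y$ pushes the spectral decomposition of $\cH$ over $C$ forward to your $\int_{X\times Y}^{\oplus}\C^{n(x,y)}\,d\eta(x,y)$, with only the \emph{marginals} of $\eta$ absolutely continuous with respect to $\nu$ and $\mu$. Since nothing later in your argument invokes $\eta\ll\nu\times\mu$ --- everything runs off the direct-integral form and the marginal conditions --- the repair is local: replace the claim ``bimodule $=$ normal representation of $A\ovt B$'' by the joint-spectral-measure statement above. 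With that correction the proof is complete.
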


We finally recall Popa's \emph{intertwining-by-bimodules} theorem.

\begin{theorem}[{\cite[Theorem 2.1 and Corollary 2.3]{Po03}}]\label{thm.intertwine}
Let $(M,\tau)$ be a tracial von Neumann algebra and let $A,B\subset M$ be possibly nonunital von Neumann subalgebras. Denote their respective units by $1_A$ and $1_B$. The following three conditions are equivalent:
\begin{enumerate}
\item $1_A L^2(M) 1_B$ admits a nonzero $A$-$B$-subbimodule that is finitely generated as a right $B$-module.
\item There exist nonzero projections $p\in A$, $q\in B$, a normal unital $*$-homomorphism $\psi:pAp\rightarrow qBq$ and a nonzero partial isometry $v\in pMq$ such that $av=v\psi(a)$ for all $a\in pAp$.
\item There is no sequence of unitaries $u_n\in \cU(A)$ satisfying $||E_B(xu_ny^*)||_2\rightarrow 0$ for all $x,y\in 1_BM1_A$.
\end{enumerate}
If one of these equivalent conditions holds, we write $A \prec_M B$.
\end{theorem}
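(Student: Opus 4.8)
The plan is to prove the three conditions equivalent by establishing the cycle $(2)\Rightarrow(1)\Rightarrow(3)\Rightarrow(2)$, the main engine being Jones's basic construction together with Popa's averaging trick. Throughout I would work inside $\langle M,e_B\rangle=(JBJ)'$ acting on $L^2(M)$, where $e_B$ is the orthogonal projection onto $\overline{L^2(B)}$ and $J$ is the modular conjugation; recall that $\langle M,e_B\rangle$ carries a canonical semifinite trace $\Tr$ determined by $\Tr(xe_By)=\tau(xy)$, and that $e_Bxe_B=E_B(x)e_B$ for $x\in M$. The dictionary I shall use repeatedly is that closed right $B$-submodules $\cH\subset L^2(M)$ correspond to projections $e_\cH\in\langle M,e_B\rangle$, that $\cH$ is in addition left $A$-invariant exactly when $e_\cH\in A'\cap\langle M,e_B\rangle$, and that $\cH$ is finitely generated as a right $B$-module precisely when $\Tr(e_\cH)<\infty$. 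For ease I first treat unital $A,B$; the nonunital case follows by replacing $M$ with the corners $1_AM1_A$ and $1_BM1_B$ where appropriate.

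For $(2)\Rightarrow(1)$ I would argue directly. Given $p,q,\psi,v$ as in (2), the relation $av=v\psi(a)$ gives $Av\subset vB$, so the closed $A$-$B$-subbimodule of $1_AL^2(M)1_B$ generated by $\hat v$ equals $\overline{\hat vB}$, which is singly generated, hence finitely generated, as a right $B$-module, and is nonzero because $v\neq0$. This is (1). For $(1)\Rightarrow(3)$ I would prove the contrapositive: assuming a sequence $u_n\in\cU(A)$ with $\|E_B(xu_ny^*)\|_2\to0$ for all $x,y\in 1_BM1_A$, I show no nonzero finitely generated right $B$-subbimodule $\cH$ can exist. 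Writing $e_\cH$ through a finite family of right-$B$-bounded vectors $\eta_1,\dots,\eta_k$ (a Pimsner--Popa basis) as $e_\cH=\sum_i L_{\eta_i}e_BL_{\eta_i}^*$, for any nonzero $\xi\in\cH$ left $A$-invariance gives $u_n\xi\in\cH$ with $\|u_n\xi\|_2=\|\xi\|_2$, while each inner product $\langle u_n\xi,\eta_i\rangle$ reduces to values of $E_B$ on products involving $u_n$ and therefore tends to $0$; this forces $\|u_n\xi\|_2^2=\langle u_n\xi,e_\cH u_n\xi\rangle\to0$, a contradiction.

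The heart of the argument is $(3)\Rightarrow(2)$, where I would use Popa's averaging. Consider the set $\cC=\overline{\mathrm{conv}}^{w}\{ue_Bu^*:u\in\cU(A)\}$; all its elements are positive, contractive, and of the fixed finite trace $\Tr(e_B1_A)$, so $\cC$ is a closed bounded convex subset of $L^2(\langle M,e_B\rangle,\Tr)$ and possesses a unique element $a$ of minimal $\|\cdot\|_{2,\Tr}$-norm. Uniqueness together with the $\cU(A)$-invariance of $\cC$ and of the norm forces $uau^*=a$ for all $u\in\cU(A)$, whence $a\in A'\cap\langle M,e_B\rangle$, $a\geq0$, and $\Tr(a)<\infty$ by lower semicontinuity. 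The key computation is that expanding a convex combination gives $\|\sum_j\lambda_ju_je_Bu_j^*\|_{2,\Tr}^2=\sum_{i,j}\lambda_i\lambda_j\|E_B(u_i^*u_j)\|_2^2$, which shows that $a=0$ is equivalent, after a maximality/exhaustion argument producing a single sequence out of the approximating convex combinations, to the sequence condition negating (3). Hence (3) forces $a\neq0$. Then a spectral projection $f=1_{[\eps,\infty)}(a)$ for suitable $\eps>0$ is a nonzero finite-trace projection in $A'\cap\langle M,e_B\rangle$; passing to a nonzero right-$B$-bounded vector in $fL^2(M)$ and using the commuting left $A$-action to select a $\psi$-eigenvector, one extracts a projection $p\in A$, a projection $q\in B$, a unital normal $*$-homomorphism $\psi:pAp\to qBq$ and a partial isometry $v\in pMq$ with $av=v\psi(a)$, which is (2).

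I expect the main obstacle to be exactly these two extraction steps carried out at the level of honest operators in $M$: turning the $L^2$-data encoded by $a$ and $f$ into a genuine bounded partial isometry $v$, and converting the approximating convex combinations into a single asymptotically $E_B$-orthogonal sequence witnessing the failure of intertwining. Both require a careful use of right-bounded vectors and a maximality argument rather than any routine computation, and it is here, rather than in the soft implications $(2)\Rightarrow(1)$ and $(1)\Rightarrow(3)$, that the real work of the theorem lies.
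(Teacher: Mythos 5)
This theorem is quoted in the paper from \cite{Po03} (Theorem 2.1 and Corollary 2.3) without proof, so the only comparison available is with Popa's original argument in the cited source. Your outline --- the cycle $(2)\Rightarrow(1)\Rightarrow(3)\Rightarrow(2)$ carried out in the basic construction $\langle M,e_B\rangle$ with its canonical semifinite trace, the Pimsner--Popa expansion $e_{\cH}=\sum_i L_{\eta_i}e_B L_{\eta_i}^*$ of a finitely generated right $B$-module for $(1)\Rightarrow(3)$, and the averaging of $ue_Bu^*$ over $\cU(A)$ with the unique minimal $\|\cdot\|_{2,\Tr}$-norm element, spectral cutoff $1_{[\eps,\infty)}(a)$, and extraction of a right-bounded vector and intertwiner $v$ for $(3)\Rightarrow(2)$ --- is exactly that standard proof, correctly assembled, and the two steps you single out (passing from $L^2$-data to an honest partial isometry, and converting the vanishing convex combinations into a single sequence of unitaries via separability and a diagonal argument) are indeed where the genuine work lies.
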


\subsection{Quasi-regularity}\label{subsec.quasi}

Let $(M,\tau)$ be a tracial von Neumann algebra and $N\subset M$ a von Neumann subalgebra. We denote by $\QN_M(N)$ the \emph{quasi-normalizer} of $N$ inside $M$, i.e.\ the unital $*$-algebra defined by
$$\Bigl\{a\in M \;\Big|\; \exists b_1,\ldots, b_k\in M,\exists d_1,\ldots,d_r\in M \;\;\text{such that}\;\; Na\subset \sum_{i=1}^k b_iN \;\;\textrm{and}\;\; aN\subset \sum_{j=1}^r Nd_j\Bigr\} \; .$$
We call $N\subset M$ \emph{quasi-regular} if $\QN_M(N)\dpr=M$.

If $A,B\subset M$ are abelian von Neumann subalgebras, we define $\qab$ as
$$\qab := \{v \in M \mid vv^* \in A' \cap M \; , \; v^* v \in B' \cap M \;\;\text{and}\;\; A v = v B \} \; .$$
Whenever $v \in \qab$, we define $q_v=\supp(E_A(vv^*))$ and $p_v=\supp(E_B(v^*v))$, and we denote by $\alpha_v : Aq_v\rightarrow Bp_v$ the unique $*$-isomorphism satisfying $a v = v \alpha_v(a)$ for all $a \in A q_v$.

Note that the set $\qab$ can be $\{0\}$. In Lemma \ref{lem.decomp}, we will see that $\qab\neq \{0\}$ if and only if there exists a bifinite $A$-$B$-subbimodule $\bim{A}{\cH}{B}$ of $\bim{A}{L^2(M)}{B}$.

We denote $\Qu_M(A,A)$ by $\qam$.

\begin{lemma}\label{lem.decomp}
Let $(M,\tau)$ be a tracial von Neumann algebra and $A,B\subset M$ abelian von Neumann subalgebras. Then the following statements hold.
\begin{enumerate}
\item If $\alpha\in \PIso(A,B)$ and if $\theta:\bim{A}{\cH(\alpha)}{B}\rightarrow\bim{A}{L^2(M)}{B}$ is an $A$-$B$-bimodular isometry, then there exists a partial isometry $v\in \qab$ such that $\alpha=\alpha_v$ and such that
\[\theta(\cH(\alpha))\subset \overline{v(B'\cap M)}^{||\cdot||_2}\subset \overline{\lspan}^{||\cdot||_2}\qab \; .\]
\item Every bifinite $A$-$B$-subbimodule $\bim{A}{\cH}{B}$ of $\bim{A}{L^2(M)}{B}$ is contained in $\overline{\lspan}^{||\cdot||_2}\qab$.
\item $\qam\dpr=\QN_M(A)\dpr$.
\item We have $\qab \neq \{0\}$ if and only if $\bim{A}{L^2(M)}{B}$ admits a nonzero bifinite $A$-$B$-subbimodule.
\end{enumerate}
\end{lemma}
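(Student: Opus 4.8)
The plan is to prove statement (1) first, since it is the engine from which (2)--(4) follow by soft arguments. Write $\cH(\alpha)=L^2(Bp)$ with $\alpha:Aq\to Bp$, and let $\widehat p\in\cH(\alpha)$ be the canonical (trace) cyclic vector of $Bp$. Put $\xi:=\theta(\widehat p)\in L^2(M)$. Using the $A$-$B$-bimodularity of $\theta$ together with the bimodule structure $a\,\widehat x\,b=\widehat{\alpha(aq)\,x\,bp}$ on $L^2(Bp)$, I would compute $a\xi=\theta(\widehat{\alpha(aq)})$ and $\xi b=\theta(\widehat{bp})$; taking $a\in Aq$ and $b=\alpha(a)$ gives the covariance relation $a\xi=\xi\alpha(a)$ for all $a\in Aq$. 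Since $\theta$ is isometric, $\|\xi b\|_2=\|\widehat{bp}\|_2\le\|b\|_2$, so $\xi$ is a right $B$-bounded vector and $\theta(\cH(\alpha))=\overline{\xi B}^{\,\|\cdot\|_2}$ is singly, hence finitely, generated as a right $B$-module. Thus $\theta(\cH(\alpha))$ is a nonzero $A$-$B$-subbimodule of $1_A L^2(M)1_B$ that is finitely generated as a right $B$-module, which is condition (1) of Theorem \ref{thm.intertwine}. Applying that theorem yields projections, a normal $*$-homomorphism $\psi$, and a nonzero partial isometry $v_0\in M$ with $av_0=v_0\psi(a)$; using the adjoint relation $\psi(a)v_0^*=v_0^*a$ one checks $v_0v_0^*\in A'\cap M$ and $v_0^*v_0\in B'\cap M$, so that $v_0\in\qab$.

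The remaining, and delicate, point of (1) is to upgrade $v_0$ to a single $v\in\qab$ implementing exactly the prescribed $\alpha$ (that is, $\alpha_v=\alpha$) and with right module exhausting $\theta(\cH(\alpha))$. I would run a maximality argument: among all $v\in\qab$ with $av=v\alpha(a)$ for $a\in Aq_v$ (so $\alpha_v$ is a corner of $\alpha$) and $\overline{vB}\subseteq\theta(\cH(\alpha))$, choose one maximizing $q_v=\supp E_A(vv^*)$. If $\xi\notin\overline{v(B'\cap M)}$, the orthogonal complement of $\overline{vB}$ inside $\theta(\cH(\alpha))$ is again a nonzero finitely generated right $B$-subbimodule, and a further application of Theorem \ref{thm.intertwine} produces a partial isometry with orthogonal support that can be added to $v$, enlarging $q_v$ and contradicting maximality. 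Hence $\theta(\cH(\alpha))\subseteq\overline{v(B'\cap M)}$ with $\alpha_v=\alpha$. Finally, for $w\in B'\cap M$ one checks $A(vw)=vBw=(vw)B$ and $(vw)^*(vw)\in B'\cap M$, and, using $\alpha_v(a)v^*=v^*a$, that $v(ww^*)v^*\in A'\cap M$; thus $v(B'\cap M)\subseteq\qab$ and $\overline{v(B'\cap M)}\subseteq\overline{\lspan}^{\,\|\cdot\|_2}\qab$, completing (1).

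For (2), I apply Lemma \ref{lem.finite} to write a bifinite subbimodule $\cH\subseteq L^2(M)$ as $\bigoplus_i\cH(\alpha_i)$; the inclusions give bimodular isometries $\theta_i:\cH(\alpha_i)\to L^2(M)$, so by (1) each $\theta_i(\cH(\alpha_i))\subseteq\overline{\lspan}^{\,\|\cdot\|_2}\qab$, and passing to the closed span yields $\cH\subseteq\overline{\lspan}^{\,\|\cdot\|_2}\qab$. For (3): if $v\in\qam$ then $Av=vA$ directly exhibits $v\in\QN_M(A)$, giving $\qam\dpr\subseteq\QN_M(A)\dpr$; conversely, for $a\in\QN_M(A)$ the defining finiteness makes $\overline{AaA}^{\,\|\cdot\|_2}$ a bifinite $A$-$A$-bimodule, so by (2) we get $a\in\overline{AaA}\subseteq\overline{\lspan}^{\,\|\cdot\|_2}\qam$, and since $\overline{N}^{\,\|\cdot\|_2}\cap M=N$ for a von Neumann subalgebra $N$, this places $a\in\qam\dpr$; hence $\QN_M(A)\dpr\subseteq\qam\dpr$ and equality holds. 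For (4): a nonzero bifinite subbimodule forces $\overline{\lspan}^{\,\|\cdot\|_2}\qab\ne 0$ by (2), so $\qab\ne\{0\}$; conversely a nonzero $v\in\qab$ gives $\overline{vB}=\overline{Av}$ (from $Av=vB$), which is singly generated both as a right $B$-module and as a left $A$-module, hence a nonzero bifinite subbimodule.

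The genuine obstacle is the passage in (1) from the $L^2$-vector $\xi$ to an honest element $v\in M$. Normalizing $\xi$ by $E_B(\xi^*\xi)^{-1/2}$, or polar-decomposing $\xi e_B$ inside the basic construction $\langle M,e_B\rangle$, only yields a partial isometry of the form $ve_B\in\langle M,e_B\rangle$ with $v$ right $B$-bounded but \emph{not} a priori right $M$-bounded, so it need not land in $M$: the left $A$-covariance does not upgrade right $B$-boundedness to membership in $M$. This is precisely the hard implication of Popa's intertwining theorem, which is why I route the extraction of $v$ through Theorem \ref{thm.intertwine}; the ensuing identification of the implemented partial isomorphism with the prescribed $\alpha$, via the maximality/exhaustion step, is the other place that requires care.
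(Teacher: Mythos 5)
Parts (2)--(4) of your proposal are correct and essentially identical to the paper's arguments, so everything below concerns part (1), which is the heart of the lemma. The gap sits exactly at the step you call delicate. Theorem \ref{thm.intertwine} is a pure existence statement about the pair $(A,B)$ inside $M$: once condition (1) of that theorem holds, it asserts that \emph{some} pair $(\psi,v_0)$ as in condition (2) exists, but its statement retains no link whatsoever between the subbimodule used to witness condition (1) and the intertwiner it outputs. So when, in your maximality argument, you apply the theorem to $\mathcal{K}:=\theta(\cH(\alpha))\ominus\overline{vB}^{\,\|\cdot\|_2}$, you are merely re-verifying condition (1) for $(A,B)$ --- which was already known --- and the theorem hands back an intertwiner that need not be supported in $\mathcal{K}$, need not implement a restriction of $\alpha$, and need not have support orthogonal to $v$; it could be (essentially) $v$ itself. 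The contradiction with maximality therefore never materializes, and the exhaustion argument does not get off the ground. (A smaller instance of the same problem occurs at the start: the theorem's $v_0$ need not lie in $\qab$ at all. One does get $v_0v_0^*\in A'\cap M$ and $Av_0\subset v_0B$, but $\psi$ need not be surjective, so neither $Av_0=v_0B$ nor $v_0^*v_0\in B'\cap M$ follows; think of $A=\C 1$ and $B$ maximal abelian, where \emph{any} partial isometry $v_0$ satisfies the intertwining relation.)

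Repairing this means proving a \emph{localized} version of the implication (1)$\Rightarrow$(2) --- given the subbimodule, produce an intertwiner inside it --- and that is precisely the direct argument you dismissed, on grounds that are a misconception. A vector $\xi\in L^2(M)$ is a closed densely defined operator affiliated with $M$, and in its polar decomposition $\xi=v|\xi|$ the partial isometry $v$ lies in $M$ automatically; only $|\xi|$ remains merely square-integrable, and no right $M$-boundedness of $\xi$ is needed. This is the paper's short proof: put $\xi:=\theta(p)$, so that $a\xi=\xi\alpha(aq)$ for all $a\in A$; since $\alpha$ is a $*$-homomorphism and $A,B$ are abelian, $\alpha(a)^*\,\xi^*\xi=\xi^*a^*\xi=\xi^*\xi\,\alpha(a)^*$ for $a\in Aq$, so $|\xi|$ and its spectral projections commute with $Bp$ and the relation descends to $av=v\alpha(aq)$; from this one checks $v\in\qab$ and $\alpha_v=\alpha$, and since $|\xi|\in L^2(B'\cap M)$ while $p$ generates $\cH(\alpha)$ as a right $B$-module, one gets $\theta(\cH(\alpha))=\overline{\xi B}^{\,\|\cdot\|_2}\subset\overline{v(B'\cap M)}^{\,\|\cdot\|_2}\subset\overline{\lspan}^{\,\|\cdot\|_2}\qab$. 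No appeal to Theorem \ref{thm.intertwine} is needed --- its genuinely hard implication, from condition (3) to conditions (1) and (2), plays no role in this lemma --- and no maximality argument either.
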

\begin{proof}
1. Let $\alpha:Aq\rightarrow Bp$ be an element of $\PIso(A,B)$. Define $\xi:=\theta(p)\in L^2(M)$ and let $\xi=v|\xi|$ be its polar decomposition.
For all $a \in A$, we have $a\xi=\xi\alpha(a)$ and hence, $av=v\alpha(a)$. Furthermore $p=\supp(E_B(v^*v))$ and $q=\alpha^{-1}(p)=\supp(E_A(vv^*))$. So we find that $v\in  \qab$ and $\alpha=\alpha_v$. Because $|\xi|\in L^2(B'\cap M)$, we have that $\xi=v|\xi|$ is an element of $\overline{v(B'\cap M)}^{||\cdot||_2}$. Since $p$ generates $\bim{A}{\cH(\alpha)}{B}$ as a right Hilbert $B$-module, we have proven the first inclusion $\theta(\cH(\alpha))\subset \overline{v(B'\cap M)}^{||\cdot||_2}$. Since $v \in \qab$, also $v(B' \cap M) \subset \qab$ and the second inclusion in statement~1 is proven as well.

2. Let $\bim{A}{\cH}{B}$ be a bifinite $A$-$B$-subbimodule of $\bim{A}{L^2(M)}{B}$. By Lemma \ref{lem.finite}, $\bim{A}{\cH}{B}$ is isomorphic to a direct sum of bimodules of the form $\bim{A}{\cH(\alpha_i)}{B}$ with $\alpha_i \in \PIso(A,B)$. Using statement~1 of the lemma, we find that $\cH$ is generated by subspaces of $\overline{\lspan}^{||\cdot||_2} \qab$. This proves statement~2.

3. By definition, we have $\qam\dpr\subset \QN_M(A)\dpr$. On the other hand, $\bim{A}{L^2(\QN_M(A)\dpr)}{A}$ is a direct sum of bifinite $A$-$A$-subbimodules of $\bim{A}{L^2(M)}{A}$. So by statement~2, we have that $L^2(\QN_M(A)\dpr)\subset \overline{\lspan}^{||\cdot||_2}(\qam)$. Therefore we conclude that $\QN_M(A)\dpr= \qam\dpr$.

Finally, 4 is an immediate consequence of 2.
\end{proof}

We end this subsection with the following lemma, clarifying why later, we will consider abelian subalgebras $A \subset M$ satisfying $\cZ(A' \cap M) = A$. Note that since $A$ is abelian, the condition $\cZ(A' \cap M) = A$ is equivalent with the ``bicommutant'' property $(A' \cap M)' \cap M = A$. Also note that the composition of two partial isomorphisms was defined before Lemma \ref{lem.finite}.

\begin{lemma}\label{lem.product}
Let $(M,\tau)$ be a tracial von Neumann algebra and $A,B,C \subset M$ abelian von Neumann subalgebras.
If $v\in \Qu_M(A,B)$, $w\in \Qu_M(B,C)$ and if $\cZ(B'\cap M)=B$, then there exists an element $u\in \Qu_M(A,C)$ such that $\alpha_w\circ \alpha_v = \alpha_u$.
\end{lemma}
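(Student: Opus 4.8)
The plan is to realize the composite partial isomorphism $\beta:=\alpha_w\circ\alpha_v\in\PIso(A,C)$, which has domain projection $q':=\alpha_v^{-1}(p_v q_w)\in A$, by a single element of $\Qu_M(A,C)$ of the shape $u=vsw$, where $s$ is an auxiliary partial isometry inside $B'\cap M$ that ``connects'' the $B$-ends $v^*v$ and $ww^*$ of $v$ and $w$. I would begin with two harmless reductions. Since the conclusion only sees $\alpha_v$ and $\alpha_w$, I may replace $v$ and $w$ by partial isometries: writing $v=v_0|v|$ with $|v|=(v^*v)^{1/2}\in B'\cap M$, one checks that $|v|$ commutes with $\alpha_v(aq_v)\in B\subseteq\cZ(B'\cap M)$, so $av_0=v_0\alpha_v(aq_v)$ and $v_0\in\Qu_M(A,B)$ with $\alpha_{v_0}=\alpha_v$ (the support projections are unchanged because $\supp E_A(vv^*)=\supp E_A(\supp(vv^*))$). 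From now on $v^*v,vv^*,w^*w,ww^*$ are projections. I will also record the basic conjugation fact that $v(B'\cap M)v^*\subseteq A'\cap M$ and $w^*(B'\cap M)w\subseteq C'\cap M$; these follow purely from $av=v\alpha_v(aq_v)$, $v^*a=\alpha_v(aq_v)v^*$ and the automatic inclusion $B\subseteq\cZ(B'\cap M)$, and they are what make the three defining conditions of $\Qu_M(A,C)$ hold for any product of this form.

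The heart of the argument is a central-support computation, and this is the only place where the hypothesis $\cZ(B'\cap M)=B$ is genuinely used. Because $B'\cap M$ is a finite von Neumann algebra with center $B$, I claim that for $y\in(B'\cap M)_+$ the projection $\supp E_B(y)$ coincides with the central support $c(y)$ of $y$ in $B'\cap M$: faithfulness of $\tau$ gives $c(y)\le \supp E_B(y)$, while $E_B(c(y)\,y)=c(y)E_B(y)$ gives the reverse. Applying this to $v^*v$ and $ww^*$ yields $p_v=c(v^*v)$ and $q_w=c(ww^*)$, so that $p_v q_w=c(v^*v)\wedge c(ww^*)$ is an honest central support against which comparison theory can be run.

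Next I construct $s$ and verify the statement. By the comparison theorem applied inside $(p_vq_w)(B'\cap M)$ — where both $p_v q_w\cdot v^*v$ and $p_vq_w\cdot ww^*$ have full central support $p_vq_w$ — I obtain a partial isometry $s\in B'\cap M$ with $ss^*\le v^*v$, $s^*s\le ww^*$ and $c(ss^*)=c(s^*s)=p_v q_w$. I then set $u:=vsw$. Membership $u\in\Qu_M(A,C)$ follows from the conjugation fact: $uu^*=v(ss^*)v^*\in A'\cap M$ and $u^*u=w^*(s^*s)w\in C'\cap M$, while $Au\subseteq uC$ and (applied to $u^*=w^*s^*v^*$) $Cu^*\subseteq u^*A$ together give $Au=uC$. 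The intertwining is the desired one: since $s\in B'\cap M$ commutes with $\alpha_v(aq_v)\in B$, one computes $a(vsw)=v\,\alpha_v(aq_v)\,sw=vs\,\alpha_v(aq_v)\,w=(vsw)\,\alpha_w(\alpha_v(aq_v)q_w)=u\,\beta(aq_u)$, so $\alpha_u$ agrees with $\beta$ on its domain.

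It then remains to check that $u$ captures \emph{all} of $\beta$, i.e.\ that $q_u=q'$ rather than a proper subprojection; this domain-matching is the main obstacle and is exactly what the preceding support analysis is designed to resolve. Using $ss^*\le v^*v$ and $s^*s\le ww^*$ one simplifies $uu^*=vss^*v^*$, and the identity $\tau(E_A(vrv^*)a)=\tau(\alpha_v(aq_v)E_B(r))$ (valid for any projection $r\le v^*v$, again because $B\subseteq\cZ(B'\cap M)$) shows $\supp E_A(vrv^*)=\alpha_v^{-1}\!\big(p_v\,\supp E_B(r)\big)$. Taking $r=ss^*$ and invoking the central-support lemma gives $\supp E_B(ss^*)=c(ss^*)=p_vq_w$, whence $q_u=\alpha_v^{-1}(p_v q_w)=q'$ and therefore $\alpha_u=\beta$ exactly. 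The reason $\cZ(B'\cap M)=B$ cannot be dropped is visible here: without it the center of $B'\cap M$ is strictly larger than $B$, the projections $v^*v$ and $ww^*$ can be Murray--von Neumann orthogonal even when $p_vq_w\neq0$, no $s$ with full central support $p_vq_w$ exists, and the naive product $vw$ (or any $u=vsw$) realizes only a proper restriction of $\alpha_w\circ\alpha_v$.
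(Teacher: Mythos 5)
Your proof is correct, but it takes a genuinely different route from the paper's. The paper also builds $u$ out of products $vbw$ with $b\in B'\cap M$, but it handles the domain problem by proving the supremum formula $\bigvee_{b\in B'\cap M} q_{vbw}=\alpha_v^{-1}(q_w p_v)$, where the hypothesis $\cZ(B'\cap M)=B$ enters through the characterization of $\supp(E_B(xx^*))$ as the orthogonal projection onto $\overline{\lspan}^{\|\cdot\|_2}\bigl((B'\cap M)xM\bigr)$; it then runs a maximality/cutting argument to extract $b_n\in B'\cap M$ whose domains $q_{vb_nw}$ are orthogonal and sum to the full domain, and sets $u=\sum_n vb_nw$, an infinite sum of pieces with orthogonal left (resp.\ right) supports. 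You instead use the hypothesis to identify $\supp E_B(y)$ with the central support $c(y)$ in $B'\cap M$ (both inequalities check out: faithfulness of $\tau$ gives one, $B$-bimodularity of $E_B$ applied to $c(y)\in\cZ(B'\cap M)=B$ gives the other), and then invoke Murray--von Neumann comparison in $(p_vq_w)(B'\cap M)$, where $v^*vq_w$ and $ww^*p_v$ both have full central support, to produce a \emph{single} $s$ with $c(ss^*)=c(s^*s)=p_vq_w$; the trace identity $\tau(E_A(vrv^*)a)=\tau(\alpha_v(aq_v)E_B(r))$ then pins down $q_u=\alpha_v^{-1}(p_vq_w)$ exactly. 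What your approach buys: $u=vsw$ is one triple product rather than an infinite sum, and no gluing/exhaustion argument is needed -- the exhaustion is absorbed into the (standard) comparison theorem, which produces full central support in one stroke. What the paper's approach buys: it needs no reduction to partial isometries and no comparison theory, working bare-hands with $L^2$-spans for arbitrary elements $v,w\in\Qu_M(\cdot,\cdot)$; its intermediate supremum formula is also reused conceptually in the gluing mechanics that underlie Proposition \ref{prop.our-equiv}. Minor points in your write-up that deserve one extra line each if written out in full: the set equality $Au=uC$ (you correctly get $uC\subseteq Au$ by applying the intertwining computation to $u^*$ and taking adjoints), and the final identification $\alpha_u=\beta$, which needs the symmetric computation $p_u=\alpha_w(p_vq_w)$ (or an injectivity remark) to ensure $\beta(aq_u)$ lies in $Cp_u$; both are routine and do not affect correctness.
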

\begin{proof}
Choose $v\in \Qu_M(A,B)$ and $w\in \Qu_M(B,C)$. Note that $vbw\in \Qu_M(A,C)$ for every $b\in B'\cap M$ and $\alpha_{vbw}=\alpha_w\circ {\alpha_v}_{| Aq_{vbw}}$. We claim that $\bigvee_{b\in B'\cap M} q_{vbw} =\alpha_v^{-1}(q_wp_v)$.

Denote $\alpha_v^{-1}(q_wp_v) - \bigvee_{b\in B'\cap M} q_{vbw}$ by $r$. We need to prove that $r$ is zero. Since $(B'\cap M)'\cap M=B$, we have that for every $x\in M$, the projection $\supp(E_B(xx^*))$ equals the projection of $L^2(M)$ onto the closed linear span of $(B'\cap M)xM\subset L^2(M)$. Since $w^*bv^*r=0$ for every $b\in B'\cap M$, it follows that $w^*q_{v^*r}=0$. Therefore $q_w$ is orthogonal to $q_{v^*r}$. Because $q_{v^*r}=\alpha_v(r)$ and $\alpha_v(r)\leq q_w$, it follows that $\alpha_v(r) = 0$. Hence $r=0$ and our claim that $\bigvee_{b\in B'\cap M} q_{vbw} =\alpha_v^{-1}(q_wp_v)$ is proven.

By cutting down with appropriate projections, we find $b_n \in B' \cap M$ such that the projections $q_{v b_n w}$ are orthogonal and sum up to $\alpha_v^{-1}(q_wp_v)$. In particular, the left supports, resp.\ right supports, of the elements $v b_n w$ are orthogonal. So we can define $u = \sum_n v b_n w$. It follows that $u \in \Qu_M(A,C)$ and $\alpha_w\circ\alpha_v=\alpha_u$.
\end{proof}

\subsection{The type of an ergodic nonsingular countable equivalence relation}\label{subsec.type}

Let $\cR$ be a nonsingular ergodic countable Borel equivalence relation on a standard probability space $(X,\mu)$. Using the map $\pi : \cR \recht X : \pi(x,y) = x$, we define the measure $\mu^{(1)}$ on $\cR$ given by
$$\mu^{(1)}(U) = \int_X \#(U \cap \pi^{-1}(x)) \; d\mu(x) \quad\text{for all Borel sets}\;\; U \subset \cR \; .$$
We define $\cR^{(2)}:=\{(x,y,z)\in X^3 \mid (x,y),(y,z)\in\cR\}$. Similarly, using the map $\rho : \cR^{(2)} \recht X : \rho(x,y,z) = x$, we define the measure $\mu^{(2)}$ on $\cR^{(2)}$ given by
$$\mu^{(2)}(V) = \int_X \#(V \cap \rho^{-1}(x)) \; d\mu(x) \quad\text{for all Borel sets}\;\; V \subset \cR^{(2)} \; .$$
The \emph{Radon-Nikodym 1-cocycle} of $\cR$ is the $\mu^{(1)}$-a.e.\ uniquely defined Borel map $\omega:\cR\rightarrow \R$ such that
\[\omega(\varphi(x),x)=\log\Bigl(\frac{d \mu\circ\varphi}{d \mu}(x)\Bigr) \quad\text{for all}\;\; \varphi\in [[\cR]] \;\;\textrm{and almost every}\;\; x\in \dom \varphi \; .\]
Note that $\omega$ satisfies the $1$-cocycle relation $\omega(x,z)=\omega(x,y)+\omega(y,z)$ for $\mu^{(2)}$-a.e.\ $(x,y,z)\in \cR^{(2)}$. One then defines the \emph{Maharam extension} $\widetilde{\cR}$ of $\cR$ as the equivalence relation on $(X\times \R, \mu \times \exp(-t)dt)$ defined by
\[(x,t)\sim (y,s) \quad\text{if and only if}\quad (x,y)\in \cR \;\;\text{and}\;\; t-s=\omega(x,y).\]
Note that $\mu \times \exp(-t)dt$ is an infinite invariant measure for $\widetilde{\cR}$.
Denote the von Neumann algebra of all $\widetilde{\cR}$-invariant functions in $L^\infty(X\times \R)$ by $L^\infty(X\times \R)^{\widetilde{\cR}}$. Since $\cR$ was assumed to be ergodic, one can easily check that the action of $\R$ on $L^\infty(X\times \R)^{\widetilde{\cR}}$ given by translation of the second variable, is also ergodic. Depending on how this action of $\R$ looks like, we define as follows the type of $\cR$.
\begin{itemize}
\item I or II, if the action is conjugate with $\R\actson \R$~;
\item III$_\lambda$ $(0<\lambda<1)$, if the action is conjugate with $\R\actson \R/\Z \log(\lambda)$~;
\item III$_1$, if the action is on one point~;
\item III$_0$, if the action is properly ergodic, i.e.\ is ergodic and has orbits of measure zero.
\end{itemize}

\begin{remark}
Denote by $L(\cR)$ the von Neumann algebra associated with $\cR$. Denote by $\vphi$ the normal semifinite faithful state on $L(\cR)$ that is induced by $\mu$. Finally denote by $(\si_t^\vphi)_{t \in \R}$ its modular automorphism group. There is a canonical identification $L(\widetilde{\cR})\cong L(\cR) \rtimes_{\sigma^{\varphi}}\R$.
Under this identification, the dual action of $\R$ on $L(\cR) \rtimes_{\sigma^{\varphi}}\R$ corresponds to the action of $\R$ on $L(\widetilde{\cR})$ that we defined above. Also, the center of $L(\cR) \rtimes_{\sigma^{\varphi}}\R$ corresponds to $L^\infty(X\times \R)^{\widetilde{\cR}}$. Altogether it follows that the type of the equivalence relation $\cR$ coincides with the type of the factor $L(\cR)$.
\end{remark}

\begin{lemma}\label{lem.type}
Let $\cR$ be a nonsingular ergodic countable Borel equivalence relation on the standard probability space $(X,\mu)$. Denote by $\omega$ its Radon-Nikodym 1-cocycle. If the essential image $\Ima(\omega)$ of $\omega$ equals $\log(\lambda)\Z$ for some $0<\lambda<1$ and if the kernel $\Ker(\omega)$ of $\omega$ is an ergodic equivalence relation, then $\cR$ is of type III$_\lambda$.
\end{lemma}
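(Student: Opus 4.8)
The plan is to unwind the definition of type given above: I must identify the von Neumann algebra $L^\infty(X\times\R)^{\widetilde{\cR}}$ of $\widetilde{\cR}$-invariant functions together with the translation action of $\R$ on it, and show that this action is conjugate to $\R\actson\R/\Z\log(\lambda)$. To fix notation I set $L:=-\log(\lambda)>0$, so that the essential image $\Ima(\om)=L\Z$ is the discrete subgroup of $\R$ generated by $L$.

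First I would produce the expected invariants. Since $\om$ takes values in $L\Z$, whenever $(x,t)\sim(y,s)$ in $\widetilde{\cR}$ we have $t-s=\om(x,y)\in L\Z$, so the second coordinate stays constant modulo $L$ along each $\widetilde{\cR}$-class. Hence the quotient map $q:X\times\R\to\R/L\Z:(x,t)\mapsto t+L\Z$ is $\widetilde{\cR}$-invariant, giving an inclusion $L^\infty(\R/L\Z)\subset L^\infty(X\times\R)^{\widetilde{\cR}}$. Under translation of the second variable this inclusion is equivariant, with $\R$ acting on $L^\infty(\R/L\Z)$ by translation. Since $L\Z=\Z\log(\lambda)$, it then remains only to prove that this inclusion is an equality, because in that case the translation action on the invariants is exactly $\R\actson\R/\Z\log(\lambda)$, and $\cR$ is of type III$_\lambda$ by definition.

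The heart of the argument is therefore to show that $q$ generates all invariants, which amounts to proving that $\widetilde{\cR}$ is ergodic on almost every fiber $q^{-1}(r)=X\times(r+L\Z)$. I would disintegrate $\mu\times\exp(-t)dt$ over $q$; on the fiber $X\times(r+L\Z)\cong X\times\Z$ (via $t=r+Lk$) the restricted relation reads $(x,k)\sim(y,k')$ iff $(x,y)\in\cR$ and $\om(x,y)=L(k-k')$. Fix a $\widetilde{\cR}$-invariant set $E\subset X\times\Z$ and write $E=\bigsqcup_k E_k\times\{k\}$. The sub-relation given by $k=k'$ is exactly $\Ker(\om)$ acting within each slice, so each $E_k$ is $\Ker(\om)$-invariant; by the ergodicity hypothesis on $\Ker(\om)$ every $E_k$ is null or conull, whence $E=X\times S$ for some $S\subset\Z$. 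To connect the slices I would use that $L\in\Ima(\om)$: there exist $\varphi\in[[\cR]]$ and a non-null set $U\subset\dom\varphi$ with $\om(\varphi(x),x)=L$ on $U$. Because $\om(x,\varphi(x))=-L$ on $U$, this $\varphi$ lifts to the partial transformation $(x,k)\mapsto(\varphi(x),k+1)$ of $X\times\Z$, which lies in $[[\widetilde{\cR}]]$. Invariance of $E=X\times S$ under it, together with $U$ being non-null, forces $k\in S\Leftrightarrow k+1\in S$; hence $S$ is invariant under $k\mapsto k+1$, so $S=\emptyset$ or $S=\Z$, proving fiberwise ergodicity.

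Finally I would assemble these facts: fiberwise ergodicity yields $L^\infty(X\times\R)^{\widetilde{\cR}}=L^\infty(\R/L\Z)$, and the translation action of $\R$ becomes the translation action $\R\actson\R/L\Z=\R/\Z\log(\lambda)$, which by the definition of type is precisely type III$_\lambda$. The main obstacle I anticipate is the measure-theoretic bookkeeping in the disintegration step, namely justifying that $\widetilde{\cR}$-invariance of $L^\infty$-functions is equivalent to fiberwise invariance over $q$, and correctly extracting from ``$\Ima(\om)=L\Z$'' the genuine element of $[[\cR]]$ realizing the unit vertical shift on a non-null set; once these are in place, the ergodicity argument on the fibers is short.
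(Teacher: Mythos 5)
Your proof is correct and is essentially the paper's argument: both proofs use ergodicity of $\Ker(\omega)$ to kill the $X$-dependence of $\widetilde{\cR}$-invariant functions and the hypothesis $\Ima(\omega)=\log(\lambda)\Z$ to pin the $t$-dependence down to exactly $\log(\lambda)\Z$-periodicity. The paper phrases this more compactly, via the inclusion $L^\infty(X\times\R)^{\widetilde{\cR}}\subset L^\infty(X\times\R)^{\Ker(\omega)}=1\otimes L^\infty(\R)$ followed by the observation that $1\otimes F$ is $\widetilde{\cR}$-invariant if and only if $F$ is invariant under translation by $\Ima(\omega)$, whereas you repackage the same two ingredients as a fiberwise ergodicity statement over $\R/\log(\lambda)\Z$.
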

\begin{proof}
Since $\Ker(\omega)$ is an ergodic equivalence relation on $(X,\mu)$, we have
$$L^\infty(X \times \R)^{\cRtil} \subset L^\infty(X \times \R)^{\Ker(\om)} = 1 \ot L^\infty(\R) \; .$$
For a given $F \in L^\infty(\R)$, we have that $1 \ot F$ is $\cRtil$-invariant if and only if $F$ is invariant under translation by the essential image of $\om$. So,
$$L^\infty(X \times \R)^{\cRtil} = 1 \ot L^\infty(\R / \log(\lambda) \Z) \; .$$
\end{proof}

\section{Equivalence relations associated to subalgebras that are abelian, but not maximal abelian}\label{sec.rel}

Throughout this section, we fix a tracial von Neumann algebra $(M,\tau)$ with separable predual. We also fix an abelian von Neumann subalgebra $A \subset M$ satisfying $\cZ(A' \cap M) = A$. Choose a standard probability space $(X,\mu)$ such that $A = L^\infty(X,\mu)$. For every nonsingular partial automorphism $\vphi$ of $(X,\mu)$, we denote by $\al_\vphi$ the corresponding partial automorphism of $A$.

We first prove that $\Qu_M(A)$ induces a nonsingular countable Borel equivalence relation $\cR(A \subset M)$ on $(X,\mu)$. For this, we introduce the notation
\begin{equation}\label{not.withG}
\cG(A \subset M) := \{\al_v \mid v \in \Qu_M(A) \} \; .
\end{equation}

\begin{proposition}\label{prop.our-equiv}
There exists a nonsingular countable Borel equivalence relation $\cR$ on $(X,\mu)$ with the following property: a nonsingular partial automorphism $\vphi$ of $X$ satisfies $\al_\vphi \in \cG(A \subset M)$ if and only if $(x,\vphi(x)) \in \cR$ for a.e.\ $x \in \dom(\vphi)$.

Moreover, $\cR$ is essentially unique: if a nonsingular countable Borel equivalence relation $\cR'$ on $(X,\mu)$ satisfies the same property, then there exists a Borel subset $X_0 \subset X$ with $\mu(X-X_0) = 0$ and $\cR_{|X_0} = \cR'_{|X_0}$.

We denote $\cR(A \subset M) := \cR$. The equivalence relation $\cR(A \subset M)$ is ergodic if and only if $\QN_M(A)\dpr$ is a factor.
\end{proposition}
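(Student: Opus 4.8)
The plan is to handle the three assertions—existence, essential uniqueness, and the ergodicity criterion—in turn, the existence part being the substantial one. Throughout I write $N:=\QN_M(A)\dpr=\Qu_M(A)\dpr$ (the equality is Lemma \ref{lem.decomp}(3)), and for $v\in\Qu_M(A)$ I denote by $\varphi_v$ the nonsingular partial automorphism of $(X,\mu)$ corresponding to $\alpha_v$. First I would record that $\cG(A\subset M)$ is a pseudogroup of nonsingular partial automorphisms: it is closed under composition by Lemma \ref{lem.product} applied with $B=C=A$ (this is exactly where the hypothesis $\cZ(A'\cap M)=A$ is used), closed under inverses since $v\in\Qu_M(A)$ forces $v^*\in\Qu_M(A)$ with $\alpha_{v^*}=\alpha_v^{-1}$, and closed under restriction since for a projection $e\in A$ one has $ev\in\Qu_M(A)$ with $\alpha_{ev}=\alpha_v|_{Ae}$. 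Finally $\cG$ is closed under countable gluings with orthogonal domains and ranges: if $v_n$ are partial isometries in $\Qu_M(A)$ with mutually orthogonal left supports and mutually orthogonal right supports, then $v=\sum_n v_n\in\Qu_M(A)$ and $\alpha_v=\bigsqcup_n\alpha_{v_n}$, exactly as in the last step of the proof of Lemma \ref{lem.product}. Nonsingularity of each $\varphi_v$ is automatic, since $\alpha_v$ is a $*$-isomorphism between corners of $A$.

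The crux is to produce a \emph{countable} subfamily of $\cG$ that generates everything, so that the resulting relation has countable classes. For this I would decompose the separable $A$-$A$-bimodule $\bim{A}{L^2(N)}{A}=\overline{\lspan}^{\|\cdot\|_2}\Qu_M(A)$. By Lemmas \ref{lem.finite} and \ref{lem.decomp}(1) it splits as a countable orthogonal direct sum $\bigoplus_n\cH(\alpha_{v_n})\cong\bigoplus_n\overline{v_nA}^{\|\cdot\|_2}$, with $v_n\in\Qu_M(A)$ partial isometries having mutually orthogonal right supports. Given an arbitrary partial isometry $v\in\Qu_M(A)$, I would expand $v=\sum_n v_nc_n$ in $\|\cdot\|_2$ with $c_n\in A$ (the right-$A$-module Fourier expansion along the pieces $\overline{v_nA}$). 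Comparing $av=v\alpha_v(a)=\sum_n v_nc_n\alpha_v(a)$ with $av=\sum_n v_n\alpha_{v_n}(a)c_n=\sum_n v_nc_n\alpha_{v_n}(a)$ and using orthogonality of the right supports yields $c_n\alpha_{v_n}(a)=c_n\alpha_v(a)$ for all $a\in A$; hence $\varphi_v$ agrees with $\varphi_{v_n}$ wherever $c_n\neq0$, and these pieces cover $\graph(\varphi_v)$. Thus $\graph(\varphi_v)\subseteq\bigcup_n\graph(\varphi_{v_n})$ modulo null sets, and I would define $\cR$ to be the (countable, by Feldman--Moore) equivalence relation generated by $\{\varphi_{v_n}\}$.

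With $\cR$ in hand, the implication $\alpha_\varphi\in\cG\Rightarrow\graph(\varphi)\subseteq\cR$ a.e.\ is immediate from the previous paragraph. For the converse I would use Lusin--Novikov to partition $\dom(\varphi)$ into countably many Borel sets $E_k$ on each of which $\varphi$ coincides with a word $\psi_k$ in the generators $\varphi_{v_n}$; each $\alpha_{\psi_k}\in\cG$ by closure under composition and inverses, each $\alpha_{\psi_k}|_{E_k}\in\cG$ by closure under restriction, and their gluing lies in $\cG$ by closure under gluing, which is legitimate since the $E_k$ are disjoint and, $\varphi$ being injective, so are the $\varphi(E_k)$; hence $\alpha_\varphi\in\cG$. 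Essential uniqueness is then formal: if $\cR'$ satisfies the same property, the ``only if'' direction for $\cR'$ forces every $\varphi_{v_n}$ to have graph in $\cR'$, so $\cR\subseteq\cR'$; and applying the ``if'' direction for $\cR'$ to any $\varphi\in[[\cR']]$ together with the property of $\cR$ gives $\cR'\subseteq\cR$.

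For the ergodicity criterion I would prove $A^{\cR}=\cZ(N)$. An element $a\in A$ is $\cR$-invariant iff $\alpha_v(aq_v)=ap_v$ for all $v$, i.e.\ iff $av=va$ for all $v\in\Qu_M(A)$, i.e.\ iff $a\in N'\cap M$; since $a\in A\subseteq N$ this means $a\in\cZ(N)$, and conversely any $z\in\cZ(N)$ commutes with $\Qu_M(A)$ and is therefore $\cR$-invariant. The two inclusions making this work are $A'\cap M\subseteq\QN_M(A)\subseteq N$ (every element commuting with $A$ is trivially a quasi-normalizer) and $\cZ(N)\subseteq A$ (if $z\in\cZ(N)$ then $z$ commutes with $A$ and with $A'\cap M\subseteq N$, so $z\in(A'\cap M)\cap\bigl((A'\cap M)'\cap M\bigr)=\cZ(A'\cap M)=A$). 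Hence $\cR$ is ergodic iff $A^{\cR}=\C1$, i.e.\ iff $\cZ(N)=\C1$, i.e.\ iff $\QN_M(A)\dpr$ is a factor. I expect the main obstacle to be the countable-generation step of the second paragraph: converting the a priori uncountable pseudogroup $\cG$ into a genuinely countable equivalence relation, and in particular justifying via the bimodule decomposition that each single point map $\varphi_v$ is covered by the countably many $\varphi_{v_n}$. The measure-theoretic bookkeeping in the converse direction (Lusin--Novikov selection, arranging orthogonal supports) is routine but must be carried out carefully.
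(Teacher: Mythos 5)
Your proof is correct in substance and shares the paper's overall architecture: closure of $\cG(A\subset M)$ under composition via Lemma \ref{lem.product}, a countability argument based on separability of the predual, and the ergodicity criterion via the identification $A^{\cR}=\cZ(\QN_M(A)\dpr)$, which you establish exactly as the paper does. The genuine difference is the countable-generation step. The paper fixes a countable $\|\cdot\|_2$-dense subset $\cJ\subset\Qu_M(A)$ and invokes Lemma \ref{lem.glue}, whose proof is a support computation with $E_A(vw^*)$, to conclude that every $\al_v$ is a gluing of the $\al_w$, $w\in\cJ$. You instead decompose $L^2(\QN_M(A)\dpr)$ into countably many bimodules $\cH(\al_{v_n})$ and compare orthogonal components of $v$; this is essentially a Hilbert-module reproof of Lemma \ref{lem.glue}, and you could shorten your argument by citing that lemma directly, e.g.\ with $\cJ=\{v_n b\mid n\in\N,\; b\in\cB\}$ for a countable $\|\cdot\|_2$-dense subset $\cB\subset A'\cap M$, so that $\overline{\lspan}^{\|\cdot\|_2}\cJ\supset L^2(\QN_M(A)\dpr)$. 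On the other hand, your explicit treatment of the Lusin--Novikov selection and of essential uniqueness fills in what the paper compresses into the phrase ``pseudogroup of countable type'', and your ergodicity argument, including the inclusions $A'\cap M\subset\QN_M(A)\dpr$ and $\cZ(\QN_M(A)\dpr)\subset A$, matches the paper's.

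Two statements in your write-up are inaccurate and need repair, although the repairs are routine and your actual applications of them are sound. First, your gluing-closure claim is false as stated: orthogonality of the left supports $v_nv_n^*$ and of the right supports $v_n^*v_n$ does not imply $\sum_n v_n\in\Qu_M(A)$. Take $M=\M_2(\C)\ovt(A\rtimes_\theta\Z)$ with $\theta$ an irrational rotation of $A=L^\infty(\T)$, and $v_1=e_{11}\ot 1$, $v_2=e_{22}\ot u$ with $u$ the canonical unitary: the left and right supports are orthogonal, yet $q_{v_1}=q_{v_2}=1$, $\al_{v_1}=\id\neq\theta^{-1}=\al_{v_2}$, and $v_1+v_2\notin\Qu_M(A)$. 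What is needed --- and what the last step of the proof of Lemma \ref{lem.product} actually arranges --- is orthogonality of the projections $q_{v_n}=\supp E_A(v_nv_n^*)$ and of the $p_{v_n}=\supp E_A(v_n^*v_n)$ in $A$; only then is $\bigsqcup_n\al_{v_n}$ even well defined. Your Lusin--Novikov application is safe precisely because the $E_k$ and the $\varphi(E_k)$ are disjoint, which is this stronger hypothesis. Second, in the Fourier expansion the coefficients cannot be taken in $A$: the component of $v$ in a closed subspace $\overline{v_nA}^{\|\cdot\|_2}$ is in general only of the form $v_nc_n$ with $c_n\in L^2(A)$; moreover, Lemma \ref{lem.decomp}.1 yields summands $\overline{\xi_nA}^{\|\cdot\|_2}$ with $\xi_n\in\overline{v_n(A'\cap M)}^{\|\cdot\|_2}$ rather than $\overline{v_nA}^{\|\cdot\|_2}$, and orthogonality of the right supports of the $v_n$ is neither guaranteed nor needed. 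The comparison survives if you work with the orthogonal components $\eta_n$ themselves: each summand is an $A$-$A$-subbimodule on which the left action of $a$ coincides with the right action of $\al_{v_n}(aq_{v_n})$, so orthogonality of the summands gives $\eta_n\cdot\al_{v_n}(aq_{v_n})=\eta_n\cdot\al_v(aq_v)$ for all $a$ and $n$, which yields the covering of $\graph(\varphi_v)$ by the $\graph(\varphi_{v_n})$ that you want.
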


Before proving Proposition \ref{prop.our-equiv}, we introduce some terminology and a lemma. To every $\al \in \PAut(A)$ are associated the support projections $q_\al,p_\al \in A$ such that $\al : A q_\al \recht A p_\al$ is a $*$-isomorphism. Assume that $\al \in \PAut(A)$ and $\cF \subset \PAut(A)$. We say that $\al$ is a \emph{gluing} of elements in $\cF$, if there exists a sequence of elements $\al_n \in \cF$ and projections $q_n \in A$ such that $q_\al = \sum_n q_n$ and such that $q_n \leq q_{\al_n}$ and $\al_{|Aq_n} = {\al_n}_{|A q_n}$ for all $n$.

\begin{lemma}\label{lem.glue}
Let $\cJ \subset \Qu_M(A)$ and $v \in \Qu_M(A)$ such that $v \in \overline{\lspan}^{||\cdot||_2} \cJ$. Then $\al_v$ is a gluing of elements in $\{\al_w \mid w \in \cJ\}$.
\end{lemma}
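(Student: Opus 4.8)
The plan is to reduce everything to a computation with the conditional expectation $E_A$, exploiting that for $v,w \in \Qu_M(A)$ the element $vw^*$ again partially intertwines $A$. First I would record, for $v,w \in \Qu_M(A)$ and $a \in A$, the relation
\[a\,vw^* = vw^*\,\beta(a) \quad\text{with}\quad \beta := \al_w^{-1}\circ\al_v \; ,\]
valid after cutting down to the relevant support projections. This follows by combining $av = v\al_v(a)$ with the adjoint relation $c\,w^* = w^*\al_w^{-1}(c p_w)$, which holds because $w = w p_w$; the latter is a consequence of $\supp(w^*w) \leq \supp E_A(w^*w) = p_w$, itself an elementary consequence of the faithfulness of $\tau$ on $A' \cap M$. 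So $vw^*$ partially intertwines $A$ via the partial automorphism $\beta = \al_w^{-1}\circ\al_v$ of $A$.

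The key step is then a support estimate. Let $e_{v,w} \in A$ be the largest projection below $q_v \wedge q_w$ on which $\al_v$ and $\al_w$ coincide; this exists since the set of projections $e \leq q_v \wedge q_w$ with $\al_v{}_{|Ae} = \al_w{}_{|Ae}$ is closed under suprema. I claim that $\supp E_A(vw^*) \leq e_{v,w}$. Writing $c := E_A(vw^*) \in A$, the Cauchy--Schwarz inequality for $E_A$ gives $\supp(c) \leq q_v \wedge q_w$, so $\beta$ is defined on $\supp(c)$. Applying $E_A$ to $a\,vw^* = vw^*\,\beta(a)$ and using that $E_A$ is $A$-bimodular yields $ac = c\,\beta(a)$; since $A$ is abelian this reads $c\,(a-\beta(a)) = 0$, whence $a = \beta(a)$ on $\supp(c)$, i.e.\ $\al_v = \al_w$ there. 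Therefore $\supp(c) \leq e_{v,w}$.

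Next I would prove that $\bigvee_{w \in \cJ} e_{v,w} = q_v$ by an orthogonality argument. Suppose not, and set $f := q_v - \bigvee_{w \in \cJ} e_{v,w} \neq 0$. For every $w \in \cJ$ we have $f \perp e_{v,w} \geq \supp E_A(vw^*)$, hence $f\,E_A(vw^*) = 0$ and so
\[\langle fv, w\rangle = \tau(w^* f v) = \tau\bigl(f\,E_A(vw^*)\bigr) = 0 \; .\]
Since $v \in \overline{\lspan}^{||\cdot||_2} \cJ$, this forces $\langle fv, v\rangle = 0$. But $\langle fv, v\rangle = \tau(f\,E_A(vv^*)) > 0$, because $0 \neq f \leq q_v = \supp E_A(vv^*)$ and $\tau$ is faithful; a contradiction. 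Hence $\bigvee_{w \in \cJ} e_{v,w} = q_v$.

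Finally, since $A$ has separable predual it is countably decomposable, so this supremum is already attained by a countable subfamily: there are $w_1, w_2, \ldots \in \cJ$ with $\bigvee_n e_{v,w_n} = q_v$. Disjointifying, put $q_n := e_{v,w_n} \wedge \bigl(1 - \bigvee_{i<n} e_{v,w_i}\bigr)$. Then the $q_n$ are orthogonal projections in $A$ with $\sum_n q_n = q_v$, and $q_n \leq e_{v,w_n} \leq q_{w_n}$ together with $\al_v{}_{|Aq_n} = \al_{w_n}{}_{|Aq_n}$; this is exactly the statement that $\al_v$ is a gluing of $\{\al_w \mid w \in \cJ\}$. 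The only genuinely delicate point is the support estimate $\supp E_A(vw^*) \leq e_{v,w}$, and in particular the bookkeeping of the support projections of the partial isometries that validates the intertwining formula for $vw^*$; once that is in place, the remainder is a soft orthogonality-plus-exhaustion argument.
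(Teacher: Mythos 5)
Your proposal is correct and takes essentially the same route as the paper: the heart of both arguments is applying $E_A$ to the intertwining relation for $vw^*$ to conclude that $\al_v$ and $\al_w$ agree on $A\,\supp(E_A(vw^*))$, combined with the observation that $v \in \overline{\lspan}^{||\cdot||_2}\cJ$ and faithfulness of $\tau$ force these agreement supports to exhaust $q_v$. The only difference is in packaging: the paper invokes a ``standard maximality argument'' to reduce to finding, below each nonzero $q \leq q_v$, a nonzero subprojection of agreement with some $w \in \cJ$, whereas you carry out that exhaustion explicitly via the supremum of the projections $e_{v,w}$, countable decomposability, and disjointification.
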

\begin{proof}
By a standard maximality argument, it suffices to prove that for every nonzero projection $q \in A q_v$, there exists a nonzero subprojection $q_0 \in A q$ and a $w \in \cJ$ such that $q_0 \leq q_w$ and ${\al_v}_{|A q_0} = {\al_w}_{|A q_0}$.

So fix a nonzero projection $q \in A q_v$. It follows that $q E_A(vv^*) \neq 0$. Since $v \in \overline{\lspan}^{||\cdot||_2} \cJ$, we can pick a $w \in \cJ$ such that $q E_A(v w^*) \neq 0$. Define $q_1 :=  \supp(E_A(v w^*))$ and note that $q_1 \in q_v A q_w = A q_v q_w$. Also note that $q q_1 \neq 0$. For all $a \in A$, we have
$$\al_v^{-1}(a p_v) \, v w^* = v \, a \, w^* = v w^* \, \al_{w}^{-1}(a p_w) \; .$$
Applying the conditional expectation onto $A$ and using that $A$ is abelian, we find that
$$\al_v^{-1}(a p_v) \, q_1 = \al_w^{-1}(a p_w) \, q_1 \quad\text{for all}\;\; a \in A \; .$$
This means that ${\al_v}_{|A q_1} = {\al_w}_{|A q_1}$. We put $q_0 := q q_1$. We already showed that $q_0 \neq 0$. Since $q_0 \leq q_1$, we have that ${\al_v}_{|A q_0} = {\al_w}_{|A q_0}$.
\end{proof}

\begin{proof}[Proof of Proposition \ref{prop.our-equiv}]
We say that a subpseudogroup $\cG \subset \PAut(A)$ is \emph{of countable type} if there exists a countable subset $\cJ \subset \cG$ such that every $\al \in \cG$ is a gluing of elements in $\cJ$. To prove the first part of the proposition, we must show that $\cG(A \subset M)$ is a subpseudogroup of countable type of $\PAut(A)$. From Lemma \ref{lem.product}, it follows that $\cG(A \subset M)$ is indeed a subpseudogroup. Since $M$ has a separable predual, we can choose a countable $\|\cdot\|_2$-dense subset $\cJ \subset \Qu_M(A)$. By Lemma \ref{lem.glue}, every $\al \in \cG(A \subset M)$ is a gluing of elements in $\{\al_w \mid w \in \cJ\}$. Hence $\cG(A \subset M)$ is of countable type. So the first part of the proposition is proven and we can essentially uniquely define the nonsingular countable Borel equivalence relation $\cR$ on $(X,\mu)$.

Since $A' \cap M \subset \QN_M(A)\dpr$ and since we assumed that $(A' \cap M)' \cap M = A$, the center of $\QN_M(A)\dpr$ is a subalgebra of $A$. By Lemma \ref{lem.decomp}.3, we have $\QN_M(A)\dpr = \Qu_M(A)\dpr$. Therefore,
$$\cZ(\QN_M(A)\dpr) = \{a \in A \mid a v = v a \;\;\text{for all}\;\; v \in \Qu_M(A) \} \; .$$
The right hand side equals $A^\cR$, the subalgebra of $\cR$-invariant functions in $A$. So $\cR$ is ergodic if and only if $\QN_M(A)\dpr$ is a factor.
\end{proof}

For our application, the following theorem is crucial. It says that $\cR(A \subset M)$ remains the same, up to stable isomorphism, if we replace $A$ by an abelian subalgebra $B$ that has a mutual intertwining bimodule into $A$.

\begin{theorem}\label{thm.stable}
Let $M$ be a II$_1$ factor with separable predual. Let $A,B\subset M$ be abelian, quasi-regular von Neumann subalgebras satisfying $\cZ(A'\cap M)=A$ and $\cZ(B'\cap M)=B$. If $A\prec_M B$ and $B\prec_M A$, then the equivalence relations $\ram$ and $\cR(B\subset M)$ are stably isomorphic.
\end{theorem}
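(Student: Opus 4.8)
The plan is to manufacture the stable isomorphism from a single nonzero partial isometry $v \in \Qu_M(A,B)$, to realize its associated partial isomorphism $\alpha_v : A q_v \recht B p_v$ as a nonsingular isomorphism $\Delta : Z \recht Z'$ of the corresponding measurable pieces, and then to check that $\Delta$ conjugates the two pseudogroups $\cG(A\subset M)$ and $\cG(B\subset M)$ by transporting intertwiners through Lemma \ref{lem.product}.

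First I would record that both $\ram$ and $\cR(B\subset M)$ are ergodic. Indeed, $A$ is quasi-regular, so $\QN_M(A)\dpr = M$, which is a factor; hence by Proposition \ref{prop.our-equiv} the relation $\ram$ is ergodic, and likewise for $B$. This is the only place the quasi-regularity and factoriality enter, and it is crucial: it guarantees that any Borel set of positive measure meets almost every orbit, so that the pieces $Z,Z'$ produced below automatically satisfy the ``meets almost every orbit'' requirement in the definition of stable isomorphism.

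The heart of the argument is to produce a nonzero $v\in\Qu_M(A,B)$. By Lemma \ref{lem.decomp}(4) this is equivalent to exhibiting a nonzero bifinite $A$-$B$-subbimodule of $\bim{A}{L^2(M)}{B}$, and here both hypotheses are needed. The relation $A\prec_M B$ furnishes, through Theorem \ref{thm.intertwine}(1), a nonzero $A$-$B$-subbimodule of $L^2(M)$ that is finite as a right $B$-module, while $B\prec_M A$, read through the canonical conjugate-linear involution $J$ of $L^2(M)$ which exchanges the two module structures, furnishes a nonzero $A$-$B$-subbimodule finite as a left $A$-module. The task is to upgrade one-sided right-finiteness and one-sided left-finiteness into genuine bifiniteness. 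Concretely, I would pass to the partial isometries provided by Theorem \ref{thm.intertwine}(2), giving $*$-homomorphisms $\psi : Aq\recht B$ and $\psi' : Bq'\recht A$ onto possibly proper subalgebras, and then compose these intertwiners with their adjoints using Lemma \ref{lem.product} --- whose applicability rests precisely on the standing hypotheses $\cZ(A'\cap M)=A$ and $\cZ(B'\cap M)=B$ --- in order to cut down to a partial isomorphism that is onto corners on both sides, i.e.\ a nonzero element of $\Qu_M(A,B)$. I expect this to be the main obstacle: a single $\prec_M$ only yields an intertwiner of $A$ into a subalgebra of $B$ whose right support need not commute with $B$, and it is the interplay of both intertwinings with the bicommutant conditions that forces the intertwiner to land in $\Qu_M(A,B)$.

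Granted such a $v$, I would finish as follows. Set $Z:=\supp(q_v)\subseteq X$ and $Z':=\supp(p_v)\subseteq X'$, and let $\Delta:Z\recht Z'$ be the nonsingular isomorphism induced by $\alpha_v$; by ergodicity $Z$ and $Z'$ meet almost every orbit. To see that $\Delta$ conjugates the relations, take any $w\in\Qu_M(A)$; noting that $v^*\in\Qu_M(B,A)$ with $\alpha_{v^*}=\alpha_v^{-1}$, two applications of Lemma \ref{lem.product} to $v^*,w,v$ (each using $\cZ(A'\cap M)=A$) produce $u\in\Qu_M(B)$ with $\alpha_u=\alpha_v\circ\alpha_w\circ\alpha_v^{-1}$. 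Thus $\Delta$ carries the graph of $\alpha_w$, restricted to $Z$, into $\cR(B\subset M)$; the symmetric construction, starting from $w'\in\Qu_M(B)$ and composing $v,w',v^*$ via $\cZ(B'\cap M)=B$, gives the reverse inclusion. Since $\cG(A\subset M)$ and $\cG(B\subset M)$ generate the respective equivalence relations, this yields $\Delta([x]\cap Z)=[\Delta(x)]\cap Z'$ for almost every $x\in Z$, which is exactly the assertion that $\ram$ and $\cR(B\subset M)$ are stably isomorphic.
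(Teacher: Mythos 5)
Your overall architecture coincides with the paper's: produce a nonzero $v\in\qab$, then conjugate the two pseudogroups through Lemma \ref{lem.product}. Indeed, your final paragraph (composing $v^*,w,v$, respectively $v,w',v^*$, by two applications of Lemma \ref{lem.product}) is essentially verbatim the paper's proof, and your preliminary remark on ergodicity correctly fills in a point the paper leaves implicit (the pieces $Z,Z'$ must meet almost every orbit). The genuine gap is in the step you yourself single out as the main obstacle: your mechanism for producing $v$ would not work. Lemma \ref{lem.product} takes as \emph{input} elements of $\Qu_M(A,B)$ and $\Qu_M(B,C)$, i.e.\ partial isometries $v$ with $vv^*\in A'\cap M$, $v^*v\in B'\cap M$ and $Av=vB$ --- intertwiners implementing isomorphisms of a corner of $A$ \emph{onto a corner of} $B$. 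The partial isometries supplied by Theorem \ref{thm.intertwine}(2) satisfy only $av=v\psi(a)$ for a unital normal embedding $\psi:pAp\recht qBq$ whose image is in general a proper subalgebra; for such $v$ one has $v^*v\in\psi(pAp)'\cap qMq$, which need not lie in $B'\cap M$, and $Av=v\,\psi(pAp)$ is in general a proper subspace of $vB$. Hence these intertwiners need not belong to $\qab$ at all, and invoking Lemma \ref{lem.product} to compose them is circular: its hypotheses are exactly what you are trying to manufacture. Nor does composing the raw intertwiners coming from $A\prec_M B$ and $B\prec_M A$ help by itself: such compositions only produce intertwiners realizing embeddings of corners of $A$ into $A$ (i.e.\ more ``$A\prec_M A$'' data), never an intertwiner onto a corner.

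Relatedly, your assertion that quasi-regularity and factoriality enter only through ergodicity is mistaken; they are precisely what powers the missing step, and this is how the paper argues: its first sentence asserts (without detailed proof) that quasi-regularity of $A$ and $B$ together with $A\prec_M B$ and $B\prec_M A$ yield a nonzero \emph{bifinite} $A$-$B$-subbimodule of $L^2(M)$, after which Lemma \ref{lem.decomp}.4 gives $v$. Here is why quasi-regularity achieves the upgrade. Let $\mathcal{K}$ be the closed linear span of all $A$-$B$-subbimodules of $L^2(M)$ that are finitely generated as right $B$-modules. For $x\in\QN_M(A)$ and any such bimodule $\cH$, the bimodule $\overline{\lspan}^{\|\cdot\|_2}Ax\cH$ is again of this type (using $Ax\subset\sum_i b_iA$ and the fact that closed submodules of finitely generated right Hilbert $B$-modules are finitely generated), so $\mathcal{K}$ is invariant under left multiplication by $\QN_M(A)\dpr=M$; being also right-$B$-invariant, $\mathcal{K}=L^2(M)f$ for a projection $f\in B'\cap M$, nonzero because $A\prec_M B$. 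Symmetrically, using quasi-regularity of $B$ and reading $B\prec_M A$ through the adjoint map, the closed span $\mathcal{K}'$ of all left-$A$-finite $A$-$B$-subbimodules equals $eL^2(M)$ with $0\neq e\in A'\cap M$. Since $M$ is a factor, $eL^2(M)f\neq 0$, so some right-$B$-finite $\cH$ has $e\cH\neq 0$; then $\overline{e\cH}$ is again a right-$B$-finite $A$-$B$-subbimodule and lies inside $\mathcal{K}'$, so some left-$A$-finite $\cH'$ has nonzero image under the orthogonal (bimodular) projection onto $\overline{e\cH}$. The closure of that image is finitely generated as a left $A$-module (image of a left-finite module under a bimodule map) and as a right $B$-module (subbimodule of a right-finite module): a nonzero bifinite bimodule, as required. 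Without quasi-regularity, the two one-sided-finite bimodules produced by your two uses of Theorem \ref{thm.intertwine} can sit in ``disjoint corners'' of $L^2(M)$, and no amount of composing will merge them.
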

\begin{proof}
Since $A,B$ are quasi-regular and since $A\prec_M B$ as well as $B\prec_M A$, there exists a nonzero bifinite $A$-$B$-subbimodule of $L^2(M)$.
So by Lemma \ref{lem.decomp}.4, there exists a nonzero element $v \in \qab$ with corresponding $\al_v \in \PAut(A,B)$. Using the notation in \eqref{not.withG} and using Lemma \ref{lem.product}, we find that
\begin{align*}
\al_v \circ \beta \circ \al_v^{-1} \in \cG(B \subset M) \quad & \text{for all}\;\; \beta \in \cG(A \subset M) \quad\text{and}\\
\al_v^{-1} \circ \gamma \circ \al_v \in \cG(A \subset M) \quad & \text{for all}\;\; \gamma \in \cG(B \subset M) \; .
\end{align*}
So $\al_v$ implements a stable isomorphism between $\cR(A \subset M)$ and $\cR(B \subset M)$.
\end{proof}

The following lemma will allow us to easily compute $\cR(A \subset M)$ in concrete examples.

\begin{lemma}\label{lem.voortbr}
Let $(M,\tau)$ be a tracial von Neumann algebra and $A \subset M$ an abelian von Neumann subalgebra satisfying $\cZ(A' \cap M) = A$.
Let $\cF\subset M$ be a subset such that
\begin{itemize}
\item $M = (\cF\cup \cF^*\cup (A'\cap M))\dpr$,
\item as an $A$-$A$-bimodule, $\overline{\lspan}^{||\cdot||_2} A\cF A$ is isomorphic to a direct sum of bimodules of the form $\bim{A}{\cH(\alpha_n)}{A}$ with $\alpha_n\in \PAut(A)$.
\end{itemize}
Choose nonsingular partial automorphisms $\vphi_n$ of $(X,\mu)$ such that $\al_n = \al_{\vphi_n}$ for all $n$. Up to measure zero, $\ram$ is generated by the graphs of the partial automorphisms $\vphi_n$.
\end{lemma}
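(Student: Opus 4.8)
The goal is to show that the equivalence relation $\ram$, which by definition is generated by all $\al_v$ with $v \in \Qu_M(A)$, is actually already generated (up to null sets) by the partial automorphisms coming from the much smaller set $\cF$. The natural strategy is to show that every $\al_v$ with $v \in \Qu_M(A)$ is a gluing of the partial automorphisms $\vphi_n$ and their inverses and compositions thereof, so that the relation $\cR'$ generated by the graphs of the $\vphi_n$ contains $\ram$; the reverse inclusion is immediate since each $\vphi_n$ comes from $\al_n = \al_{\vphi_n} \in \cG(A \subset M)$. By the essential-uniqueness clause in Proposition \ref{prop.our-equiv}, it suffices to verify that a nonsingular partial automorphism $\vphi$ satisfies $\al_\vphi \in \cG(A \subset M)$ precisely when its graph lies in $\cR'$.

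First I would set up the containment $\cR' \subset \ram$. Each graph $\graph(\vphi_n)$ is contained in $\ram$ because $\al_n = \al_{\vphi_n}$ equals $\al_{v_n}$ for the partial isometry $v_n \in \qam$ furnished by Lemma \ref{lem.decomp}.1 applied to the bimodule $\bim{A}{\cH(\al_n)}{A}$; hence each $\vphi_n \in [[\ram]]$, and since $\ram$ is an equivalence relation, $\cR' \subset \ram$. For the reverse inclusion, the key structural input is the first hypothesis $M = (\cF \cup \cF^* \cup (A' \cap M))\dpr$. I would argue that the $\|\cdot\|_2$-closed linear span of $\Qu_M(A)$ is generated, as a space, from $A' \cap M$ together with the partial isometries associated to $\cF$ and $\cF^*$: concretely, every $v \in \Qu_M(A)$ can be $\|\cdot\|_2$-approximated by words alternating between elements of $\cF \cup \cF^*$ and elements of $A' \cap M$. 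The second hypothesis ensures that $\overline{\lspan}^{\|\cdot\|_2} A\cF A$ decomposes into the bimodules $\cH(\al_n)$, so by Lemma \ref{lem.decomp}.1 these words involve only the partial isometries $v_n$ (and their adjoints), multiplied on either side by elements of $A' \cap M$.

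The heart of the argument is then a gluing/composition step. Using Lemma \ref{lem.product} with $B = A$ (legitimate since $\cZ(A' \cap M) = A$), the partial automorphism induced by a product $v_{n_1} c_1 v_{n_2} c_2 \cdots$ with $c_i \in A' \cap M$ is the composition $\cdots \al_{n_2} \circ \al_{n_1}$ of the constituent partial automorphisms; crucially, insertions of $c \in A' \cap M$ do not change the induced partial automorphism of $A$ beyond cutting down by projections, since conjugation by such $c$ is trivial on $A$. Hence the partial automorphism of any such word lies in the pseudogroup generated by the $\al_n^{\pm 1}$, i.e.\ its graph lies in $\cR'$. Finally, given an arbitrary $v \in \Qu_M(A)$ with $v \in \overline{\lspan}^{\|\cdot\|_2} \cJ$ where $\cJ$ is the countable set of such words, Lemma \ref{lem.glue} shows that $\al_v$ is a gluing of the $\al_w$ with $w \in \cJ$, and a gluing of maps whose graphs lie in $\cR'$ again has graph in $\cR'$. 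This gives $\ram \subset \cR'$ and hence equality up to measure zero.

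**Main obstacle.** The delicate point I expect is the passage from the \emph{algebraic} generation statement $M = (\cF \cup \cF^* \cup (A'\cap M))\dpr$ to a \emph{Hilbert-space} statement controlling $\overline{\lspan}^{\|\cdot\|_2}\Qu_M(A)$ in terms of words. One must argue that the quasi-normalizer closure is spanned by such alternating words and that the $A$-$A$-bimodular decomposition of $\overline{\lspan}^{\|\cdot\|_2} A\cF A$ really does confine the relevant partial isometries to the list $\{v_n\}$; the insertions of $A' \cap M$ are harmless on $A$ but require care to ensure supports are handled correctly via the $\supp(E_A(\cdot))$ bookkeeping from Lemma \ref{lem.decomp}. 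Managing the supports and the gluing combinatorics so that nothing outside $\cR'$ is produced is where the technical work concentrates.
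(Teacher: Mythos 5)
Your proposal is correct and follows essentially the same route as the paper's proof: obtaining the partial isometries $v_n$ from Lemma \ref{lem.decomp}.1 together with the containment of $\overline{\lspan}^{||\cdot||_2} A\cF A$ in $\overline{\lspan}^{||\cdot||_2}\{v_n(A'\cap M)\}$, forming the set $\cJ$ of words in $\{v_n\}\cup\{v_n^*\}\cup(A'\cap M)$ whose induced partial automorphisms have graphs in the relation generated by the $\vphi_n$ (via Lemma \ref{lem.product}), deducing $\|\cdot\|_2$-density of $\lspan\cJ$ from the generation hypothesis, and concluding with the gluing argument of Lemma \ref{lem.glue}. The only slip is calling $\cJ$ countable (it is not, since $A'\cap M$ is uncountable), but this is immaterial because Lemma \ref{lem.glue} does not require countability of $\cJ$.
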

\begin{proof}
We again use the notation \eqref{not.withG}. By Lemma \ref{lem.decomp}.1, we find $v_n \in \Qu_M(A)$ such that $\al_n = \al_{v_n}$ and
\begin{equation}\label{eq.aneq1}
\overline{\lspan}^{||\cdot||_2} A\cF A \subset \overline{\lspan}^{||\cdot||_2} \{ v_n (A' \cap M) \mid n \in \N\} \; .
\end{equation}
In particular, we have $\al_n \in \cG(A \subset M)$. Choose nonsingular partial automorphisms $\vphi_n$ of $(X,\mu)$ such that $\al_n = \al_{\vphi_n}$ for all $n$.

Denote by $\cR$ the smallest (up to measure zero) equivalence relation on $(X,\mu)$ that contains the graphs of all the partial automorphisms $\vphi_n$. By the previous paragraph, we know that $\cR$ is a subequivalence relation of $\cR(A \subset M)$. Denote by $\cJ$ the set of all products of elements in
$$\{v_n \mid n \in \N\} \cup \{v_n^* \mid n \in \N\} \cup (A' \cap M) \; .$$
By construction, the graph of every $\al_w$, $w \in \cJ$, belongs to $\cR$. Combining our assumption that $M = (\cF\cup \cF^*\cup (A'\cap M))\dpr$ with \eqref{eq.aneq1}, it follows that $\lspan \cJ$ is $\|\cdot\|_2$-dense in $L^2(M)$. By Lemma \ref{lem.glue}, every $\al \in \cG(A \subset M)$ is a gluing of elements in $\{\al_w \mid w \in \cJ\}$. So the graph of every $\al \in \cG(A \subset M)$ belongs to $\cR$ a.e. Hence $\cR$ equals $\cR(A \subset M)$ almost everywhere.
\end{proof}

We finally note in the following proposition that every nonsingular countable Borel equivalence relation $\cR$ arises as $\cR(A \subset M)$.

\begin{proposition}
Let $\cR$ be a nonsingular countable Borel equivalence relation. Then there exists a quasi-regular inclusion of an abelian von Neumann algebra $A$ in a tracial von Neumann algebra $(M,\tau)$ satisfying $\cZ(A'\cap M)=A$ and such that $\cR\cong \cR(A\subset M)$.
\end{proposition}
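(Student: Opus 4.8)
The plan is to produce $M$ by directly generalizing the way $L(\BS(n,m))$ is an HNN extension of $L(\Z)$, replacing the base $L(\Z)$ by a copy of $A$ with a non-trivial relative commutant and replacing the two finite-index subalgebras $\{u_{a^n}\}\dpr,\{u_{a^m}\}\dpr$ by corners whose ``index ratio'' is an arbitrary prescribed Jacobian. First I would fix a standard probability space $(X,\mu)$ and, by the theorem of Feldman and Moore, realize $\cR$ on $(X,\mu)$; concretely I fix a countable family of partial isomorphisms $\vphi_n\in[[\cR]]$ whose graphs generate $\cR$, and set $\rho_n:=\frac{d\mu\circ\vphi_n^{-1}}{d\mu}$. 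Put $A:=L^\infty(X,\mu)$ and $N:=A\ovt R$ with $R$ the hyperfinite $\mathrm{II}_1$ factor. Since $\cZ(N)=A\ovt\cZ(R)=A$, the algebra $A$ is \emph{central} in $N$; this centrality is what will eventually force $\cZ(A'\cap M)=A$.

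Next I would encode each $\rho_n$ into a pair of corner projections. Choose a measurable $e_n:\dom(\vphi_n)\to(0,1]$, small enough that $d_n(t):=\rho_n(t)\,e_n(\vphi_n^{-1}(t))\le 1$ on $\ran(\vphi_n)$, and pick projections $p_n,q_n\in N$ supported over $\dom(\vphi_n),\ran(\vphi_n)$ with $E_A(p_n)=e_n$ and $E_A(q_n)=d_n$; such fields exist because each fibre $R$ has projections of every trace in $[0,1]$. A change of variables yields $\tau(p_n)=\int e_n\,d\mu=\int d_n\,d\mu=\tau(q_n)$, and more precisely $\int a\,e_n\,d\mu=\int\al_{\vphi_n}(a)\,d_n\,d\mu$ for all $a\in A$. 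Hence there is a trace-preserving $*$-isomorphism $\theta_n:p_nNp_n\to q_nNq_n$ carrying $Ap_n$ onto $Aq_n$ via $\al_{\vphi_n}$ and acting fibrewise as a trace-scaling isomorphism of $\mathrm{II}_1$ corners of $R$ with scaling factor $\rho_n$; I would take the fibrewise part ``in general position'' (outer on $R$), which matters for the relative-commutant computation below.

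Now let $M$ be the von Neumann algebraic HNN extension of $N$ over the corner isomorphisms $\theta_n$ (in the sense of Ueda): it is generated by $N$ together with stable partial isometries $v_n$ satisfying $v_n^*v_n=p_n$, $v_nv_n^*=q_n$ and $v_n x v_n^*=\theta_n(x)$ for $x\in p_nNp_n$, and in the trace-preserving case it carries a faithful normal trace extending $\tau_N$, so $(M,\tau)$ is tracial with separable predual. Then $v_n\in\qam$, $\al_{v_n}$ induces $\vphi_n$ on $X$, and $E_A(v_nv_n^*)=d_n$, $E_A(v_n^*v_n)=e_n$. I would verify the three required properties as follows. (a) Since $A$ is central in $N$ one has $N\subseteq A'\cap M$; using Britton's lemma (Lemma~\ref{lem.brit}) on \emph{reduced words} together with the general position of the fibre maps $\theta_n$, no reduced word of positive length commutes with all of $A$, so $A'\cap M=N$ and $\cZ(A'\cap M)=\cZ(N)=A$. (b) Every element of $A'\cap M=N$ commutes with $A$ and hence lies in $\QN_M(A)$, each $v_n\in\QN_M(A)$, and $N$ together with the $v_n$ generate $M$, so $\QN_M(A)\dpr=M$ and $A\subset M$ is quasi-regular. (c) Applying Lemma~\ref{lem.voortbr} with $\cF=\{v_n\}$ — note $M=(\cF\cup\cF^*\cup(A'\cap M))\dpr$ and $\overline{\lspan}^{\|\cdot\|_2}A\cF A\cong\bigoplus_n\cH(\al_{v_n})$ — shows that $\ram$ is generated by the $\vphi_n$, hence equals $\cR$, and the cocycle of $\ram$ computed from $d_n,e_n$ via $\frac{d_n(t)}{e_n(\vphi_n^{-1}(t))}=\rho_n(t)$ reproduces the Radon–Nikodym cocycle of $\cR$, so $\ram\cong\cR$ as nonsingular equivalence relations.

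The technical heart is the combination of the HNN construction with part (a): establishing that the HNN extension over the \emph{corner} isomorphisms $\theta_n$ is genuinely tracial (so that $(M,\tau)$ is finite even though $\ram$ will be of type $\mathrm{III}$), and that its relative commutant collapses to $A'\cap M=N$. The apparent tension — $\tau(v_n^*v_n)=\tau(v_nv_n^*)$ forces equal global trace, yet the fibrewise profiles satisfy $e_n\neq d_n\circ\vphi_n$ — is precisely resolved by the integral identity $\int e_n\,d\mu=\int d_n\,d\mu$, the pointwise discrepancy being exactly the Jacobian $\rho_n$. Making the traciality and the outerness-based control of reduced words rigorous is where the real work lies; everything else reduces to the bookkeeping lemmas already available in Section~\ref{subsec.quasi} and Section~\ref{sec.rel}.
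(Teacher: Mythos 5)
Your target object is the right one, and your trace computation is correct: the paper's construction also ends up with $A' \cap M \cong L^\infty(X) \ovt R$ together with partial isometries scaling the fibre trace by the Radon--Nikodym derivative (its elements $v_{n,k} = (1\ot w_n)W_{\vphi_k}p$ are exactly your $v_n$). But there are two genuine gaps, both located precisely where you say ``the real work lies,'' and the second is fatal as stated. First, the construction you invoke does not exist as cited: Ueda's HNN extensions require a \emph{unital} subalgebra $H \subset N$, a unital embedding $\theta : H \recht N$ and a \emph{unitary} stable letter, whereas you need countably many partial-isometry stable letters over the non-unital corner embeddings $p_n N p_n \subset N$. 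This part is repairable: replace $\theta_n$ by the unital trace-preserving embedding of $H_n := p_n N p_n \oplus \C (1-p_n)$ sending $x + \lambda(1-p_n)$ to $\theta_n(x) + \lambda(1-q_n)$ (this uses your identity $\tau(p_n) = \tau(q_n)$), set $v_n := u_n p_n$ where $u_n$ is the stable unitary, and iterate the HNN construction countably many times. But you must actually carry this out; the existence of a tracial $(M,\tau)$ with your generators and relations is the main content of the statement, not a citation.

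Second, and this cannot be fixed by ``general position,'' your claim (a) is false in general. An HNN extension is a universal, maximally free construction: it does not remember the relations that the $\vphi_n$ satisfy inside $[[\cR]]$. Whenever a reduced word $w$ in the letters $v_n^{\pm 1}$ induces on $A$ a partial automorphism that is the identity on a set of positive measure --- for instance $w = v_2^* v_1^* v_2 v_1$ when $\cR$ comes from a pmp action of $\Z^2$ with commuting generators $\vphi_1, \vphi_2$, or $w = v_1 v_1$ when $\vphi_1$ has period-two points --- the corresponding compression of $w$ commutes with $A$, while Britton's lemma guarantees that it lies outside $N$. Hence $A' \cap M \supsetneq N$. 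Such words are unavoidable for \emph{every} choice of generating family unless $\cR$ is treeable, and the Proposition concerns arbitrary $\cR$. Outerness of the fibre maps is irrelevant to this point, because commutation with $A$ only sees the base transformations. What you actually need is the weaker statement $\cZ(A' \cap M) = A$, which would follow from $N' \cap M = A$; but proving that requires the composite fibre cocycle along every base-trivial reduced word to be outer, an ``outerness along all loops'' condition that choosing each $\theta_n$ separately in general position does not obviously produce. The paper avoids all of this by using a non-universal construction: $M$ is a corner of the crossed product of $L^\infty(X) \ovt P$, with $P$ the hyperfinite II$_\infty$ factor carrying a trace-scaling action of $\R$, by the full pseudogroup $[[\cR]]$ twisted by the Radon--Nikodym cocycle. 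There $W_\vphi W_\psi = W_{\vphi \circ \psi}$ holds exactly, so all relations of $[[\cR]]$ are respected, the trace exists by the same Maharam-type cancellation you observed, and $L^\infty(X)' \cap \cM = L^\infty(X) \ovt P$ is automatic, yielding $\cZ(A' \cap M) = A$ with no freeness or outerness argument at all.
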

\begin{proof}
Let $\cR$ be a nonsingular countable Borel equivalence relation on a standard probability space $(X,\mu)$. Denote by $(P,\Tr)$ the unique hyperfinite II$_\infty$ factor and choose a trace-scaling action $(\alpha_t)_{t \in \R}$ of $\R$ on $P$. This means that $\Tr\circ \alpha_t=e^{-t}\Tr$. The corresponding action of $\R$ on $L^2(P)$ will also be denoted by $(\alpha_t)$. We denote by $\omega : \cR \recht \R$ the Radon-Nikodym $1$-cocycle of $\cR$ (see Section \ref{subsec.type}).

In the same way as with the Maharam extension of a nonsingular group action, the equivalence relation $\cR$ admits a natural trace preserving action on $L^\infty(X) \ovt P$. We denote by $(\cM,\Tr)$ the crossed product. For completeness, we recall the construction of $(\cM,\Tr)$. To every $\vphi\in [[\cR]]$, we associate the operator $W_\vphi$ on $L^2(\cR,L^2(P))$ given by
$$(W_\vphi\xi)(x,y)=\begin{cases} \alpha_{\omega(x,\vphi^{-1}(x))}(\xi(\vphi^{-1}(x),y)) &\quad\text{if}\;\; x\in \dom(\vphi^{-1}) \; , \\ 0 &\quad\text{otherwise,} \end{cases}$$
for every $\xi\in L^2(\cR,L^2(P))$. One checks that $W_\vphi W_\psi = W_{\vphi\circ\psi}$ and $W_\vphi^*=W_{\vphi^{-1}}$.

We represent $L^\infty(X) \ovt P = L^\infty(X,P)$ on $L^2(\cR,L^2(P))$ by
$$(F\xi)(x,y)=F(x)\xi(x,y) \;\;\text{for all}\;\; \xi\in L^2(\cR,L^2(P)) \;\;\text{and}\;\; F\in L^\infty(X,P) \; .$$
Note that the partial isometries $W_\vphi$, $\vphi \in [[\cR]]$, normalize $L^\infty(X,P)$ and that
$$(W_\vphi^*FW_\vphi)(x)=\begin{cases} \alpha_{\omega(x,\vphi(x))}(F(\vphi(x))) &\quad\text{if}\;\; x\in \dom\vphi \\ 0 &\quad\text{otherwise.} \end{cases}$$

Define $\cM$ as the von Neumann algebra generated by $L^\infty(X,P)$ and the partial isometries $W_\vphi$, $\vphi \in [[\cR]]$. Denoting by $\Delta \subset \cR$ the diagonal subset, the orthogonal projection onto $L^2(\Delta,L^2(P))$ implements a normal faithful conditional expectation $E : \cM \recht L^\infty(X) \ovt P$ satisfying
$$E(W_\vphi)=\chi_{\{x\mid \vphi(x)=x\}} \ot 1 \;\;\text{for all}\;\; \vphi\in [[\cR]] \; .$$
The formula $\Tr := (\mu \otimes \Tr) \circ E$ defines a normal semifinite faithful trace on $\cM$.

Fix a nonzero projection $q\in P$ with $\Tr(q)=1$. Define the projection $p \in L^\infty(X) \ovt P$ given by $p = 1 \ot q$. Write $A:=L^\infty(X)p$ and $M:=p\cM p$. Then $A$ is a quasi-regular abelian von Neumann subalgebra of $M$ and the restriction of $\Tr$ to $M$ gives a normal faithful tracial state $\tau$ on $M$. The relative commutant $L^\infty(X)' \cap \cM$ equals $L^\infty(X) \ovt P$. Since $P$ is a factor, it follows that $\cZ(A'\cap M)=A$.

We finally prove that $\cR\cong \cR(A\subset M)$.
Write $\cR=\bigcup_k \graph(\vphi_k)$, with $\vphi_k\in [\cR]$. Then $\vphi_k$ induces an automorphism of $L^\infty(X)$ and hence of $A = L^\infty(X) p$ that we denote by $\beta_k \in \Aut(A)$. Since $P$ is a II$_\infty$ factor and $q \in P$ is a finite projection, we can choose partial isometries $w_n \in P$ such that $\sum_n w_n^* w_n = 1$ and $w_n w_n^* = q$ for all $n$. Define the elements
$$v_{n,k} := (1 \ot w_n) W_{\vphi_k} p \; .$$
All $v_{n,k}$ belong to $\qam$ and $\alpha_{v_{n,k}}$ equals the restriction of $\beta_k$ to $A p_{n,k}$ for projections $p_{n,k} \in A$. Since the sum of all $w_n^* w_n$ equals $1$, we also have that $\bigvee_n p_{n,k} = p$. Therefore the graphs of the partial automorphisms $\alpha_{v_{n,k}}$ generate an equivalence relation that is isomorphic with $\cR$. To conclude the proof, we put $\cF := \{v_{n,k} \mid n,k \in \N\}$ and observe that $M = (\cF \cup \cF^* \cup (A' \cap M))\dpr$.
By Lemma \ref{lem.voortbr}, the equivalence relation $\cR(A \subset M)$ is generated by the graphs of the partial automorphisms $\alpha_{v_{n,k}}$.
\end{proof}

\section{\boldmath Proof of Theorem \ref{thm.A}}\label{sec.class}

Throughout this section, we assume that $n$ and $m$ are integers satisfying $2\leq n<|m|$. As explained in the introduction, the corresponding groups $\BS(n,m)$ form a complete list of the nonamenable icc Baumslag-Solitar groups up to isomorphism.

Throughout this section, we write $M = L(\BS(n,m))$ and $A=\{u_a , u_a^*\}\dpr$. We start with the following observation.

\begin{proposition}\label{prop.voorw}
We have that $A \subset M$ is a quasi-regular abelian von Neumann subalgebra satisfying $\cZ(A' \cap M) = A$. Moreover, $A' \cap M$ has no amenable direct summand.
\end{proposition}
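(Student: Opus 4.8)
The plan is to treat the three assertions separately, but all through one description of $A'\cap M$ in terms of the conjugation action of $\langle a\rangle$ on $\BS(n,m)$. Write $A=L(\langle a\rangle)$, so that $M$ is generated by $A$ and $u_b$. For quasi-regularity I would exhibit $u_b\in\QN_M(A)$ directly. Put $A_0=L(\langle a^n\rangle)$ and $A_1=L(\langle a^m\rangle)$, which have index $n$ and $|m|$ in $A$, and use $u_bA_0u_b^*=L(b\langle a^n\rangle b^{-1})=L(\langle a^m\rangle)=A_1$, i.e. $u_bA_0=A_1u_b$. Decomposing $A=\sum_{k=0}^{n-1}A_0u_{a^k}=\sum_{l=0}^{|m|-1}u_{a^l}A_1$ then gives $u_bA\subset\sum_{k=0}^{n-1}A(u_bu_{a^k})$ and $Au_b\subset\sum_{l=0}^{|m|-1}(u_{a^l}u_b)A$, so $u_b\in\QN_M(A)$. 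Since $A$ and $u_b$ generate $M$, this yields $\QN_M(A)\dpr=M$.

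Next I would describe $A'\cap M$. As $A'\cap M$ is the fixed-point space of $\operatorname{Ad}(u_a)$, its $L^2$-closure is spanned by the orbit sums $\sum_{g\in O}u_g$ over the \emph{finite} orbits $O$ of the conjugation action of $\langle a\rangle$ on $\BS(n,m)$, and these orbits make up exactly $\Sigma:=\bigcup_{N\ge1}C_{\BS(n,m)}(a^N)$. Using the Bass--Serre tree of $\BS(n,m)=\HNN(\Z,n\Z,\theta)$ together with Britton's Lemma \ref{lem.brit}, the element $a$ fixes a unique vertex $v_0$: on the neighbours of $v_0$ it acts by two cycles of lengths $n$ and $|m|$, both $\ge2$, hence with no fixed neighbour, so $C_{\BS(n,m)}(a)=\langle a\rangle$. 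Analysing $\operatorname{Fix}(a^N)$ the same way, I would identify $\Sigma$ as the increasing union of the subgroups $\Gamma_k:=\langle\,b^{-k}ab^k,\dots,b^{k}ab^{-k}\,\rangle$. Each $\Gamma_k$ is an iterated amalgamated free product of copies of $\Z$, built from the relations $(b^{-j}ab^{j})^{n}=(b^{-j-1}ab^{j+1})^{m}$, in which a suitable power $a^{M_k}$ is central and $\Gamma_k/\langle a^{M_k}\rangle$ is a nonamenable icc free product of cyclic groups. Consequently $A'\cap M=\overline{\bigcup_k\bigl(A'\cap L(\Gamma_k)\bigr)}$.

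The center and the no-amenable-summand statements then both follow from a single fibered computation of $P_k:=A'\cap L(\Gamma_k)$. Disintegrating $L(\Gamma_k)$ over its center $L(\langle a^{M_k}\rangle)$, a.e. fiber $\cN_y$ is the (twisted) group factor of the nonamenable icc group $\Gamma_k/\langle a^{M_k}\rangle$, hence a nonamenable II$_1$ factor, and $u_a$ restricts to a unitary $v_y$ with $v_y^{M_k}$ scalar. Since $A'\cap L(\Gamma_k)=\{u_a\}'\cap L(\Gamma_k)$, its fiber is $\{v_y\}'\cap\cN_y=\bigoplus_i e_i\cN_y e_i$, where the $e_i$ are the spectral projections of $v_y$; this is nonamenable, with center $\bigoplus_i\C e_i$, and reassembling these spectral projections over $L(\langle a^{M_k}\rangle)$ recovers $L(\langle a\rangle)=A$, so $\cZ(P_k)=A$ for every $k$. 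For $z\in\cZ(A'\cap M)$ the expectations $E_{P_k}(z)$ lie in $\cZ(P_k)=A$ and converge to $z$ in $\|\cdot\|_2$; as $A$ is closed, $z\in A$, giving $\cZ(A'\cap M)=A$, equivalently the bicommutant property $(A'\cap M)'\cap M=A$. Finally, taking $k=1$, the subalgebra $Q:=A'\cap L(\Gamma_1)\subset A'\cap M$, with $\Gamma_1=\langle a,\,b^{-1}ab\rangle\cong\langle x,y\mid x^n=y^m\rangle$, has $\cZ(Q)=A=\cZ(A'\cap M)$ and, by the same fiber picture, no amenable summand. Hence for every nonzero central projection $z\in A$ the corner $Qz\subset(A'\cap M)z$ is nonamenable, and since there is a trace-preserving conditional expectation $(A'\cap M)z\to Qz$, the corner $(A'\cap M)z$ cannot be injective. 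Thus $A'\cap M$ has no amenable direct summand.

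The hardest step is the group-theoretic identification $\Sigma=\bigcup_k\Gamma_k$: the inclusion $\Gamma_k\subset\Sigma$ is immediate since a power of $a$ is central in $\Gamma_k$, but the reverse inclusion — that every element centralizing some power of $a$ already lies in one of these iterated amalgams — requires a careful description of the fixed trees $\operatorname{Fix}(a^N)$ and of the elliptic elements preserving them. A secondary technical point is to verify that a.e. fiber $\cN_y$ is genuinely a factor (icc-ness of the quotients $\Gamma_k/\langle a^{M_k}\rangle$), since this is exactly what makes the fiberwise relative-commutant computation valid and thereby delivers both $\cZ(P_k)=A$ and the nonamenability of the fibers.
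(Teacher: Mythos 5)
Your route is genuinely different from the paper's, and its skeleton (realize $A'\cap M$ as the weak closure of $\bigcup_k\bigl(A'\cap L(\Gamma_k)\bigr)$ and compute each piece fiberwise over its center) could be made to work; but as written it has a real gap, and it sits exactly at the step you defer. The identification $\Sigma=\bigcup_k\Gamma_k$ is not a routine tree computation: since $\bigcup_k\Gamma_k=\ker\chi$, where $\chi:\BS(n,m)\rightarrow\Z$ is the homomorphism with $\chi(a)=0$, $\chi(b)=1$, your claim asserts that every element commuting with some $a^N$ has zero $b$-exponent sum. This is \emph{false} when $|n|=|m|$ (in $\BS(n,n)$ the stable letter $b$ centralizes $a^n$), so any correct proof must invoke the standing hypothesis $n\neq|m|$, and your sketch -- ``describe $\operatorname{Fix}(a^N)$ and the elliptic elements preserving it'' -- never does; in particular it does not rule out \emph{hyperbolic} elements of $C(a^N)$, which is where the content lies. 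The missing argument is, for instance: if $ga^Ng^{-1}=a^N$, then $\langle a\rangle$ and $g\langle a\rangle g^{-1}$ both contain $\langle a^N\rangle$ with index $N$, so the commensuration index ratio $\delta(g)=[\langle a\rangle:\langle a\rangle\cap g\langle a\rangle g^{-1}]\big/[g\langle a\rangle g^{-1}:\langle a\rangle\cap g\langle a\rangle g^{-1}]$ equals $1$; since $\delta$ is a homomorphism with $\delta(a)=1$, $\delta(b)=|m|/n$, it equals $(|m|/n)^{\chi}$, and $n\neq|m|$ forces $\chi(g)=0$. Until this is in place, the decomposition $A'\cap M=\overline{\bigcup_k P_k}$ -- on which both your center computation and your nonamenability argument rest -- is unproven. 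A second (also acknowledged) gap is the icc-ness of $\Gamma_k/\langle a^{M_k}\rangle$, equivalently that the FC-center of $\Gamma_k$ equals its center, which you need for $\cZ(L(\Gamma_k))=L(\langle a^{M_k}\rangle)$ before any fiberwise computation is legitimate; note also that for $k\geq2$ these quotients are iterated amalgams of finite cyclic groups, not free products of cyclic groups.

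It is worth seeing how the paper sidesteps all of this: it never computes $A'\cap M$ at all. It passes to the finite-index subalgebra $A_0=L(\langle a^n\rangle)$ and uses a single subgroup $G=\langle a,b^{-1}ab\rangle\subset C(a^n)$, giving $L(G)\subset A_0'\cap M$ and hence the sandwich $\cZ(A_0'\cap M)\subset L(G)'\cap M\subset A_0$, where the last inclusion requires only the Britton's-lemma fact that every $\gamma\in\BS(n,m)-a^{n\Z}$ has infinite $G$-conjugation orbit. Nonamenability of $G$ (an amalgam $\Z\ast_{\Z}\Z$ over $n\Z\cong m\Z$) then yields the absence of amenable direct summands, and the finite-index trick (projections $p_j\in A$ with $Ap_j=A_0p_j$ and $\sum_jp_j=1$) transports both conclusions from $A_0$ to $A$. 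So the proposition needs only a one-sided containment plus an infinite-orbit count, not the exact determination of the set of finite $\langle a\rangle$-conjugation orbits. If you do want your finer description of $A'\cap M$ -- which is of independent interest -- the modular-function argument above is the ingredient that fills your main gap.
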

\begin{proof}
It is clear that $A \subset M$ is a quasi-regular abelian von Neumann subalgebra, because the element $a \in \BS(n,m)$ generates an almost normal abelian subgroup of $\BS(n,m)$~: for every $g \in \BS(n,m)$, the group $g a^\Z g^{-1} \cap a^\Z$ has finite index in $a^\Z$.

To prove that $\cZ(A'\cap M)=A$, we define the finite index subalgebra $A_0:=\{u_a^n , u_a^{-n}\}\dpr$ of $A$. We will first prove that $\cZ(A_0'\cap M)=A_0$. Afterwards we will show that this implies that $\cZ(A'\cap M)=A$.

Define $G := \langle a^\Z,b^{-1}a^\Z b\rangle \subset \BS(n,m)$. Then $L(G)$ is a subalgebra of $A_0'\cap M$. So $\cZ(A_0'\cap M)\subset L(G)'\cap M$. Using Lemma \ref{lem.brit}, one can easily see that $\{g\gamma g^{-1}\mid g\in G\}$ is an infinite set for every $\gamma \in \BS(n,m)-a^{n\Z}$. Therefore $L(G)'\cap M\subset A_0$. This shows that $\cZ(A_0'\cap M)\subset A_0$. Since the converse inclusion is obvious, we find that $A_0 = \cZ(A_0'\cap M)$.

Since $A_0\subset A$ has finite index, there exist orthogonal projections $p_j\in A$ such that $Ap_j=A_0p_j$ and $\sum_j p_j=\dblone$. But then
\begin{align*}
\cZ(A'\cap M)p_j &= \cZ((A'\cap M)p_j)= \cZ((Ap_j)'\cap p_jMp_j)\\
&= \cZ((A_0p_j)'\cap p_jMp_j)=\cZ(p_j(A_0'\cap M)p_j)\\
&= \cZ(A_0'\cap M)p_j = A_0p_j \subset A
\end{align*}
Therefore $\cZ(A'\cap M)\subset A$. The converse inclusion being obvious, we have proven that $A = \cZ(A' \cap M)$.

Using Lemma \ref{lem.brit}, it follows that $G$ is an amalgamated free product of two copies of $\Z$ over a copy of $\Z$ embedded as $n\Z$ and $m \Z$ respectively. In particular, $G$ is nonamenable and $L(G)$ has no amenable direct summand. Since $L(G) \subset A_0' \cap M$, it follows that $A_0' \cap M$ has no amenable direct summand either. As above, we have that
$$(A' \cap M) p_j = p_j(A_0' \cap M) p_j$$
for all $j$. Hence $A' \cap M$ has no amenable direct summand.
\end{proof}

We now identify the associated countable equivalence relation $\cR(A \subset M)$.

\begin{proposition}\label{prop.type}
The equivalence relation $\ram$ is isomorphic with the unique hyperfinite ergodic countable equivalence relation of type III$_{n/|m|}$.
\end{proposition}

\begin{proof}
Let $k$ be the greatest common divisor of $n$ and $|m|$. Write $n = n_0 k$ and $m = m_0 k$. By our assumptions on $n$ and $m$, we have that $1 \leq n_0 < |m_0|$. Define the countable Borel equivalence relation $\rnm$ on the circle $\mathbb{T}$ given by
$$\rnm:= \bigl\{(y,z) \in \T \times \T  \bigm| \exists a ,b\in \N \;\;\textrm{such that}\;\; a+b > 0 \;\;\text{and}\;\; y^{(n_0^{a} m_0^{b} k)}=z^{(m_0^{a}n_0^{b} k)}\bigr\} \; .$$
Equip $\T$ with its Lebesgue measure $\lambda$ and note that $\rnm$ is a nonsingular countable Borel equivalence relation on $(\T,\lambda)$.

Define $\cR_0 :=\{(y,z)\in \T \times \T \mid y^{m}=z^{n}\}$. Note that $\cR_0 \subset \rnm$ and that $\rnm$ is the smallest equivalence relation containing $\cR_0$. Define $\pi : \cR_0 \recht \T : \pi(y,z) = y^m$. Note that $\pi$ is $n|m|$-to-$1$. Define the probability measure $\mu$ on $\cR_0$ given by
\[\mu(U)=\frac{1}{n|m|}\int_{\mathbb{T}}\# \bigl(U \cap \pi^{-1}(\{x\}) \bigr) \; d\lambda(x) \; .\]
For all $k,l \in \Z$, we define the function $P_{k,l} : \cR_0 \recht \T : P_{k,l}(y,z) = y^k z^l$. A direct computation yields a unique unitary
$$T:L^2(\cR_0,\mu)\rightarrow \overline{\lspan}^{||\cdot||_2} A u_b A : P_{k,l} \mapsto u_a^k u_b u_a^l \; .$$
We turn $L^2(\cR_0,\mu)$ into an $L^\infty(\mathbb{T})$-$L^\infty(\mathbb{T})$-bimodule by the formula
$$(F \cdot \xi \cdot F')(y,z) = F(y) \, \xi(y,z) \, F'(z) \; .$$
Under the natural identification of $L^\infty(\T)$ and $A$, the unitary $T$ is $A$-$A$-bimodular.

By construction $\bim{A}{L^2(\cR_0,\mu)}{A}$ is isomorphic with a direct sum of bimodules of the form $\bim{A}{\cH(\al_j)}{A}$ where the union of the graphs of the partial automorphisms $\al_j$ equals $\cR_0$ and hence generates the equivalence relation $\rnm$. Applying Lemma \ref{lem.voortbr} to $\cF = \{u_b\}$, we conclude that $\ram \cong \rnm$ up to measure zero.

As in Section \ref{subsec.type}, denote by $\om : \rnm \recht \R$ the Radon-Nikodym $1$-cocycle. Denote by $\Lambda \subset \T$ the subgroup given by
$$\Lambda := \Bigl\{\exp\Bigl( \frac{2\pi i s}{(n_0 m_0)^b} \Bigr) \; \Big| \; s \in \Z ,b \in \N \Bigr\} \; .$$
For every $z \in \Lambda$, we denote by $\al_z : \T \recht \T$ the rotation $\al_z(y) = zy$. We have $\graph \al_z \subset \rnm$ for all $z \in \Lambda$. Since all $\al_z$ are measure preserving, we actually have $\graph \al_z \subset \Ker(\om)$. Since $\Lambda \subset \T$ is a dense subgroup, it follows that $\Ker(\om)$ is an ergodic equivalence relation. In particular, $\rnm$ is ergodic. A direct computation shows that $\om(y,z) = \log(n/|m|)$ for all $(y,z) \in \cR_0$. Since $\cR_0$ generates the equivalence relation $\rnm$, it follows that the essential image of $\om$ equals $\log(n/|m|) \Z$. Using Lemma \ref{lem.type}, we conclude that $\rnm$ is of type III$_{n/|m|}$. By construction, $\rnm$ is amenable and hence, hyperfinite.
\end{proof}

We are now ready to prove our main theorem.

\begin{proof}[Proof of Theorem \ref{thm.A}]
Fix for $i = 1,2$, integers $n_i, m_i \in \Z$ with $2 \leq n_i < |m_i|$. Put $M_i = L(\BS(n_i,m_i))$ and denote by $A_i \subset M_i$ the abelian von Neumann subalgebra generated by $u_a$, where $a \in \BS(n_i,m_i)$ is the first canonical generator. Assume that $M_1$ and $M_2$ are stably isomorphic. We must prove that
\begin{equation}\label{desire}
\frac{n_1}{|m_1|} = \frac{n_2}{|m_2|} \; .
\end{equation}
Interchanging if necessary the roles of $M_1$ and $M_2$, we can take a nonzero projection $p_1 \in A_1$ and a $*$-isomorphism $\al : p_1 M_1 p_1 \recht M_2$.

We claim that inside $M_2$, we have $\al(A_1 p_1) \prec A_2$. From Proposition \ref{prop.voorw}, we know that
$$P := \al(A_1 p_1)' \cap M_2 = \al((A_1' \cap M_1)p_1)$$
has no amenable direct summand. By Proposition 3.1 in \cite{Ue07}, the HNN extension $M_2$ can be viewed as the corner of an amalgamated free product of tracial von Neumann algebras. Since $P$ has no amenable direct summand, it then follows from \cite[Theorem 4.2]{CH08} that $P' \cap M_2 \prec A_2$. So our claim that $\al(A_1 p_1) \prec A_2$ follows.

By symmetry, we also have the intertwining $\al^{-1}(A_2) \prec A_1 p_1$ inside $p_1 M_1 p_1$. Applying $\al$, we find that $A_2 \prec \al(A_1 p_1)$ inside $M_2$.

Having proven that inside $M_2$ we have the intertwining relations $\al(A_1 p_1) \prec A_2$ and $A_2 \prec \al(A_1 p_1)$, it follows from Theorem \ref{thm.stable} that the equivalence relations $\cR(A_1 p_1 \subset p_1 M_1 p_1)$ and $\cR(A_2 \subset M_2)$ are stably isomorphic. By construction, $\cR(A_1 p_1 \subset p_1 M_1 p_1)$ is the restriction of $\cR(A_1 \subset M_1)$ to the support of $p_1$. So we conclude that the equivalence relations $\cR(A_1 \subset M_1)$ and $\cR(A_2 \subset M_2)$ are stably isomorphic. In particular, these ergodic nonsingular equivalence relations must have the same type. Using Proposition \ref{prop.type}, we find \eqref{desire}.
\end{proof}

\end{document}